\pgfplotsset{compat=1.17}
\numberwithin{equation}{section} 
\theoremstyle{definition}
\theoremstyle{plain}
\newcommand{\R}{\mathbb{R}}
\newcommand{\N}{\mathbb{N}}
\newcommand{\E}{\mathbb{E}}
\title{On the surjectivity of the conditional expectation\\ given a real random variable}
\date{\today}
\numberwithin{equation}{section} 
\numberwithin{equation}{section}
\newtheorem{thm2}{Theorem}[section]
\newtheorem{lemma}[thm2]{Lemma}
\newtheorem{cor2}[thm2]{Corollary}
\newtheorem{prop2}[thm2]{Proposition}
\theoremstyle{definition}
\newtheorem{def2}[thm2]{Definition}
\newtheorem{example}[thm2]{Example}
\newtheorem{rk}[thm2]{Remark}
\newcommand{\julien}[1]{\todo[inline, backgroundcolor=red!20!white]{JG: #1}}
\author{
  Julien Guyon$^{1,2}$, 
  Thibault Jeannin$^{1}$, 
  Benjamin Jourdain$^{1}$}
\date{}
\begin{document}

\maketitle

\renewcommand{\thefootnote}{\arabic{footnote}} 

\footnotetext[1]{CERMICS, ENPC, Institut Polytechnique de Paris, Inria, Marne-la-Vallée, France. Emails: 
\href{mailto:julien.guyon@enpc.fr}{julien.guyon@enpc.fr}, 
\href{mailto:thibault.jeannin@enpc.fr}{thibault.jeannin@enpc.fr}, 
\href{mailto:benjamin.jourdain@enpc.fr}{benjamin.jourdain@enpc.fr}.}

\footnotetext[2]{NYU Tandon School of Engineering, Department of Finance and Risk Engineering, One MetroTech
Center, Brooklyn, NY 11201, USA. Email: 
\href{mailto:julien.guyon@nyu.edu}{julien.guyon@nyu.edu}.\\ This research benefited from the support of
the chair {\it Risques Financiers}, Fondation du Risque, and from the BNP Paribas chair {\it Futures of Quantitative Finance}.}

\makeatletter
\let\@fnsymbol\@arabic 
\makeatother

\renewcommand{\thefootnote}{} 
\addtocounter{footnote}{-1} 
\renewcommand{\thefootnote}{\arabic{footnote}} 

 \begin{abstract}
In this paper, we investigate the distributions of random couples $(X,Y)$ with $X$ real-valued such that any non-negative integrable random variable $f(X)$ can be represented as a conditional expectation, $f(X)=\E[g(Y)|X]$, for some non-negative measurable function $g$. It turns out that this representation property is related to the smallness of the support of the conditional law of $X$ given $Y$, and in particular fails when this conditional law almost surely has a non-zero absolutely continuous component with respect to the Lebesgue measure. We give a sufficient condition for the representation property and check that it is also necessary under some additional assumptions (for instance when $X$ or $Y$ are discrete). We also exhibit a rather involved example where the representation property holds but the sufficient condition does not. Finally, we discuss a weakened representation property where the non-negativity of $g$ is relaxed. This study is motivated by the calibration of time-discretized path-dependent volatility models to the implied volatility surface.
\end{abstract}

\maketitle

\noindent\small{{\bf Keywords:} surjectivity, conditional expectation, inverse problem, deconvolution, integral equation.}

\maketitle

\section{Introduction}

Recent studies in quantitative finance \cite{ Zumbach2009,
Zumbach2010,  ChicheporticheBouchaud2014, Bouchaud2021,  GuyonLekeufack2023, AndreBoumezoudJourdain2023} have highlighted the significant path-dependence of volatility: it is very well explained by past asset returns—but not by just the current asset price. Those studies show that the instantaneous volatility of financial markets is mostly endogenous and that it depends only marginally on extra random factors. For example, in~\cite{GuyonLekeufack2023}, volatility is modeled as a function of a path-dependent variable
\[
Y_t = (R_{1,t}, R_{2,t}),
\]
where \( R_{1,t} \) and \( R_{2,t} \) are, respectively, convex combinations of exponentially weighted averages of past returns and past squared returns. If \( X_t \) denotes the price of the underlying asset, the instantaneous volatility is expressed as
\begin{equation} \label{eq:vol_model}
\sigma_t = \beta_0 + \beta_1 R_{1,t} + \beta_2 \sqrt{R_{2,t}},
\end{equation}
with
\[
R_{1,t} = (1-\theta_1)\int_{-\infty}^t \lambda_{1,0}\, e^{-\lambda_{1,0} (t - s)} \, \frac{dX_s}{X_s}+\theta_1 \int_{-\infty}^t \lambda_{1,1}\, e^{-\lambda_{1,1} (t - s)} \, \frac{dX_s}{X_s}, \quad
\]
\[
R_{2,t} = (1-\theta_2)\int_{-\infty}^t \lambda_{2,0}\, e^{-\lambda_{2,0} (t - s)} \left( \frac{dX_s}{X_s} \right)^2+\theta_2\int_{-\infty}^t \lambda_{2,1} e^{-\lambda_{2,1} (t - s)} \left( \frac{dX_s}{X_s} \right)^2.
\]
Path-dependent volatility (PDV) models are a new breed of models that precisely capture this endogeneity of markets, and PDV is a new, very promising paradigm of volatility modeling in finance. A natural question then arises: is it possible to build a ``pure'' PDV model\footnote[3]{By pure PDV model, we mean that in \eqref{eq:model}, $\sigma(t,Y_t)$ is not multiplied by a leverage function $\ell(t,X_t)$ like in \eqref{eq:smileSDE}.} that exactly fits an arbitrage-free implied volatility surface (IVS)? More precisely, once relevant path-dependent features \( Y_t \) have been chosen, assuming we observe an IVS in a market with zero rates, repo and dividends, is it possible to build a deterministic function \( \sigma \) such that the following model, where $(W_t)_{t\ge 0}$ is a Brownian motion,
\begin{equation}
    \frac{dX_t}{X_t} = \sigma(t, Y_t) \,dW_t,
    \label{eq:model}
\end{equation}
is exactly calibrated to the IVS observed in the market? This would reconcile the path-dependency of volatility with the path-independence of vanilla option payoffs. We recall that according to Gyöngy \cite{Gyongy1986} and Dupire \cite{Dupire1994}, a necessary and sufficient condition for model \eqref{eq:model} to exactly fit the IVS is
\begin{equation}
    \forall t \geq 0,\quad \,\mathbb{E}[\sigma^2(t, Y_t) | X_t] = \sigma^2_{\text{loc}}(t, X_t),
    \label{eq:calibration}
  \end{equation}
  where the local volatility function is given from the market prices $(\mathcal{C}(t,x))_{t,x>0}$ of call options with maturity $t$ and strike $x$ by Dupire's formula 
  $$\sigma^2_{\text{loc}}(t, x)=\frac{2\,\partial_t\mathcal{C}(t,x)}{x^2\,\partial_{xx}^2\mathcal{C}(t,x)}.$$
  The implicit and complicated dependence of the distribution of $(X_t,Y_t)_{t\ge 0}$ on $\sigma$ makes the existence of $\sigma$ such that \eqref{eq:calibration} holds a very difficult mathematical question.

 A related problem, which shares the same implicit nature, is the calibration to the market IVS of stochastic volatility models $$\frac{dX_t}{X_t}=\varphi(Z_t)\,dW_t,$$
  in which $(Z_t)_{t\ge 0}$ solves an autonomous stochastic differential equation driven by a Brownian motion correlated with $(W_t)_{t\ge 0}$. In such models, the few parameters governing the dynamics of $(Z_t)_{t\ge 0}$ and the function $\varphi$ (there are $4$ parameters in the celebrated Heston model where $(Z_t)_{t\ge 0}$ is a Cox-Ingersoll-Ross process and $\varphi(z)=\sqrt{z}$) do not offer enough flexibility to satisfy \eqref{eq:calibration} and thus perfectly fit to the market IVS.  Instead of considering a non-parametric search for a function $\sigma$ such that $\frac{dX_t}{X_t}=\sigma(t,Z_t)\,dW_t$ is calibrated to the IVS, the efforts to overcome this shortfall and introduce more flexibility have focused on so-called stochastic local volatity models. In these models, the calibration to the IVS is enabled by the choice of a leverage function $\ell$ which multiplies $\varphi(Z_t)$ in the dynamics of the asset price:
  \begin{equation}
\frac{dX_t}{X_t} = \varphi(Z_t)\,\ell(t, X_t)\,dW_t.
\label{eq:smileSDE}
\end{equation}
  The necessary and sufficient condition for the exact fit of the IVS analogous to \eqref{eq:calibration} writes \begin{equation}
\ell^2(t, X_t) = \frac{\sigma^2_{\text{loc}}(t, X_t)}{\mathbb{E}[\varphi(Z_t)^2|X_t]}.\label{eq:effectiveVol}
\end{equation}
 Again, the existence of a leverage function $\ell$ such that this condition holds is a very difficult mathematical question because of the implicit and complicated dependence of the joint law of $(X_t,Z_t)_{t\ge 0}$ on $\ell$ (even if the law of $(Z_t)_{t\ge 0}$ is now fixed). Several articles have discussed the existence of a process \( (X_t, Z_t)_{t \geq 0} \) that satisfies equations \eqref{eq:smileSDE} and \eqref{eq:effectiveVol}. Abergel and Tachet \cite{AbergelTachet} address the existence problem by analyzing a discretized version (in both time and space) of the Fokker–Planck equation associated with equation (1.1). Their strategy is based on a fixed-point argument, which yields a short-time existence result. Similarly, Jourdain and Zhou \cite{JourdainZhou2002} use a Galerkin discretization to examine the question of existence, under the assumption that the process \( (Z_t)_{t \geq 0} \) takes values in a finite set. In contrast, Lacker, Shkolnikov, and Zhang \cite{LackerShkolnikovZhang} consider a setting with continuous time and space, under the restrictive assumption that all coefficients are time-homogeneous, that is,
\[
\ell(t, X_t) = \ell(X_t), \quad \sigma_{\text{loc}}(t, X_t) = \sigma_{\text{loc}}(X_t), \quad
dZ_t = \alpha(Z_t)\,dt + \beta(Z_t)\,dB_t,
\]
where \( W \) and \( B \) are independent. Under a mean-reverting-type assumption on \( (X_t)_{t \geq 0} \), as well as additional regularity assumptions, they establish the existence of a stationary solution. Finally, Djete recently provided in \cite{DjeteNonRegular} a theorem for continuous time and space, possibly non-homogeneous coefficients, correlated Brownian drivers, arbitrary maturities \( T \), and any sufficiently smooth initial condition. However, he only considers instantaneous variances of the form
$$
c + p_X(t, X_t) \,v(t, X_t, Y_t),
$$
where \( c > 0 \), \( p_X(t, \cdot) \) is the density of the law of \( X_t \), and \( Y_t \) is an Itô process.

When going to the Euler discretization with step $h>0$, we get
  $$X^{h}_{(k+1)h}=X^h_{kh}+\ell(kh,X^h_{kh})\,\varphi(Z^h_{kh})\,(W_{(k+1)h}-W_{kh}),\quad k\in\N,$$
  and the law of $(X^h_{kh},Z^h_{kh})$ only depends on $(\ell(jh,\cdot))_{0\le j\le k-1}$ but not on $\ell(kh,\cdot)$ so that the equation
  $$\ell^2(kh,x)=\frac{\sigma^2_{\text{loc}}(kh, x)}{\E[\varphi(Z^h_{kh})^2|X^h_{kh}=x]},\quad x\in\R,$$
  is explicit and can be used to compute $\ell(kh,\cdot)$ recursively as proposed by Guyon and Henry-Labordère \cite{GuyonLabordere} who rely on interacting particles to approximate the conditional expectation.  This numerical method provides an efficient calibration procedure, even if some calibration errors may appear  for large maturities, when the range of the stochastic volatility process $(\varphi(Z_t))_{t\geq 0}$ is very large. Note that according to \cite{FJWZ}, for $T,K>0$, $\E[(X^{T/n}_T-K)^+]-\mathcal{C}(T,K)$ is at most of order $n^{-1/2}$.

  The Euler discretization of the path-dependent volatility model \eqref{eq:model} writes 
  $$X^{h}_{(k+1)h}=X^h_{kh}+\sigma(kh,Y^h_{kh})\,(W_{(k+1)h}-W_{kh}),\quad k\in\N,$$
  and the law of $(X^h_{kh},Z^h_{kh})$ only depends on $(\sigma(jh,\cdot))_{0\le j\le k-1}$ but not on $\sigma(kh,\cdot)$. Nevertheless, the existence of $\sigma^2(kh,\cdot)$ solving the discretized version of \eqref{eq:calibration}, 
\begin{equation}\E[\sigma^2(kh,Y^h_{kh})|X^h_{kh}]=\sigma^2_{\text{loc}}(kh, X^h_{kh}), \label{eq:purePDVsmilecalib_discrete}
\end{equation}
is not obvious even if this equation is linear in unknown $\sigma^2(kh,\cdot)$. Note that no special structure is expected for the local variance function $\sigma^2_{\text{loc}}(kh,\cdot)$ which is an input of the equation. In particular, it may take the value $0$.
  The aim of the present paper is to address this existence issue which we formalize in the following way: under which conditions on the distribution $\pi$ of $(X,Y)$ with $X$ a real-valued random variable and $Y$ with values in ${\mathcal Y}$, do we have
\begin{equation}
\forall f:\mathbb{R} \to \mathbb{R}_+ \text{ such that } \mathbb{E}[f(X)] < \infty,\;
\exists g:\mathcal{Y} \to \mathbb{R}_+ \text{ measurable such that }
\mathbb{E}[g(Y) | X] = f(X) \text{ a.s.?}
\end{equation}
Moreover, from the calibration perspective, it is natural to suppose that $Y$ is finite-dimensional, no further difficulty arises when weakening this condition by assuming that ${\mathcal Y}$ is a separable Banach space.
Then, we denote by $\mathcal{B}({\mathcal Y})$ the Borel sigma algebra on ${\mathcal Y}$. Observe that since both $\mathcal{Y}$ and $\mathbb{R}$ are separable metric spaces, we have (see for example Lemma 6.4.2 of \cite{Bogachev2007v2})
$
\mathcal{B}(\mathbb{R} \times \mathcal{Y}) = \mathcal{B}(\mathbb{R}) \times \mathcal{B}(\mathcal{Y}).
$
Let $\pi$ be a probability measure on $(\mathbb{R} \times \mathcal{Y}, \mathcal{B}(\mathbb{R} \times \mathcal{Y})) = (\mathbb{R} \times \mathcal{Y}, \mathcal{B}(\mathbb{R}) \times \mathcal{B}(\mathcal{Y}))$, with respective marginals $\mu$ and $\nu$ defined by
\[
\mu(A) = \pi(A \times \mathcal{Y})  \quad \text{and} \quad \nu(B) = \pi(\mathbb{R} \times B)
\]
for $A \in \mathcal{B}(\mathbb{R})$ and $B \in \mathcal{B}(\mathcal{Y})$.

By the disintegration theorem (see Theorem 10.22 of \cite{Dudley2002}), there exist probability kernels $(x, B) \mapsto \pi_{X = x}(B)$ on $\mathbb{R} \times \mathcal{B}(\mathcal{Y})$ and $(y, A) \mapsto \pi_{Y = y}(A)$ on $\mathcal{Y} \times \mathcal{B}(\mathbb{R})$ such that
\[
\pi(dx, dy) = \mu(dx)\, \pi_{X = x}(dy) = \nu(dy)\, \pi_{Y = y}(dx),
\]
and $\mu(dx)$-a.e., $\pi_{X = x}$ is unique, while $\nu(dy)$-a.e., $\pi_{Y = y}$ is unique.

Our aim is thus to characterize the probability measures $\pi$ on $\mathbb{R} \times \mathcal{Y}$ such that
\begin{equation}
    \forall f \in L^1_+(\mu),\; \exists g:\mathcal{Y} \to \mathbb{R}_+ \text{ measurable such that } \mu(dx)\text{-a.e.,} \quad
    f(x) = \int_{\mathcal{Y}} g(y)\, \pi_{X = x}(dy).
    \tag{$\mathcal{R}_+$} \label{egtoutf}
\end{equation}
Note that necessarily $g \in L^{1}_{+}(\nu) \text{ since } \|g\|_{L^{1}(\nu)} \;=\; \|f\|_{L^{1}(\mu)}.$ This inverse problem of conditional expectations is general enough to be interesting in its own right, beyond the financial context. To the best of our knowledge, it has never been addressed in the literature.  The fact that the conditioning random variable $X$ is real-valued plays an important role in most of our analysis, but not in the finite-support case that we now consider.

\medskip
Let us first suppose that $\mu$ and $\nu$ are finitely supported, i.e., $\mu=\sum_{i=1}^I p_i\,\delta_{x_i}$ and $\nu=\sum_{j=1}^Jq_j\,\delta_{y_j}$ where $I,J\in\N^{*}=\mathbb{N} \setminus \{0\}$, $(p_1,\ldots,p_I)\in(0,1]^I$ and $(q_1,\ldots,q_J)\in(0,1]^J$ are such that $\sum_{i=1}^I p_i=1=\sum_{j=1}^J q_j$, $x_1,\dots,x_I$ and $y_1,\dots,y_J$ are distinct real numbers. For $f(x)=\mathbf{1}_{\{x_i\}}(x)$, 
\begin{equation}
   \exists g:\R\to\R_+\mbox{ measurable such that }\mu(dx)\mbox{-a.e.},\quad f(x)=\int_{y\in \mathcal{Y}}g(y)\,\pi_{X=x}(dy)\label{egfdon}
\end{equation}
 implies that $\sum_{j=1}^Jg(y_j)\,\pi_{X=x_i}(\{y_j\})=1$ and $\sum_{j=1}^Jg(y_j)\sum_{i'\ne i}\pi_{X=x_{i'}}(\{y_j\})=0$ so that there exists a function $\tau : \{1, \dots, I\} \to \{1, \dots, J\}$,
 such that $\pi_{X=x_i}(\{y_{\tau(i)}\})>0$ and $\sum_{i'\ne i}\pi_{X=x_{i'}}(\{y_{\tau(i)}\})=0$. Moreover $\tau$ is one-to-one so that $J\ge I$. Let us conversely suppose 
\begin{align}
   \exists\tau:\{1,\ldots,I\}&\rightarrow\{1,\ldots,J\}\mbox{ one-to-one such that }\notag\\&\forall i\in\{1,\ldots,I\},\;\pi_{X=x_i}(\{y_{\tau(i)}\})>0\mbox{ and }\sum_{i'\ne i}\pi_{X=x_{i'}}(\{y_{\tau(i)}\})=0.\label{cnssupfin}
\end{align}
Then $\sum_{i'\ne i}\pi_{X=x_i}(\{y_{\tau(i')}\})=0$ and for $f\in L^1_+(\mu)$, the function $g(y)=\sum_{i'=1}^I\mathbf{1}_{\{y=y_{\tau(i')}\}}\frac{f(x_{i'})}{\pi_{X=x_{i'}}(\{y_{\tau(i')}\})}$ is non-negative and such that for all $i\in\{1,\ldots,I\}$,
$$\int_{y \in \mathcal{Y}}g(y)\,\pi_{X=x_i}(dy)=\sum_{j=1}^Jg(y_j)\,\pi_{X=x_i}(\{y_j\})=\sum_{i'=1}^I\frac{f(x_{i'})\,\pi_{X=x_i}(\{y_{\tau(i')}\})}{\pi_{X=x_{i'}}(\{y_{\tau(i')}\})}=f(x_i).$$
Hence, when $\mu$ and $\nu$ are both finitely supported, \eqref{cnssupfin} is a necessary and sufficient condition for \eqref{egtoutf} to hold. In other words, \eqref{egtoutf} holds if and only if the matrix \(M = (M_{i,j})_{1 \leq i \leq I, 1 \leq j \leq J}\) defined by \(M_{i,j} = \pi_{X=x_i}(\{y_j)\}\) is of the following form (here for $I=4$ and $J=6$) 
\[
M = \begin{pmatrix}
0 & \textcolor{red}{\diamond} & \star & 0 & \star & 0 \\
\textcolor{red}{\diamond} & 0 & \star & 0 & \star & 0 \\
0 & 0 & \star & \textcolor{red}{\diamond} & \star & 0 \\
0 & 0 & \star & 0 & \star & \textcolor{red}{\diamond} \\
\end{pmatrix},
\]
where $\star$ are free non-negative coefficients (as long as the constraint that the sum of the coefficients in each row equals 1 is respected), and the $\textcolor{red}{\diamond}$ entries (those with indices $i,\tau(i)$) are strictly positive. 

Since for $i,i'\in\{1,\ldots,I\}$, $\pi_{X=x_{i'}}(\{y_{\tau(i)}\})=\frac{\pi(\{(x_{i'},y_{\tau(i)})\})}{p_{i'}}=\frac{q_{\tau(i)}}{p_{i'}}\,\pi_{Y=y_{\tau(i)}}(\{x_{i'}\})$, the condition\\ $\sum_{i'\ne i}\pi_{X=x_{i'}}(\{y_{\tau(i)}\})=0$ also writes $\pi_{Y=y_{\tau(i)}}=\delta_{x_i}$ and implies $\pi_{X=x_i}(\{y_{\tau(i)}\})>0$. In other words, \eqref{egtoutf} holds if and only if the matrix \(M^* = (M^*
_{j,i})_{1 \leq j \leq J, 1 \leq i \leq I}\) defined 
by \(M^*_{j,i} = \pi_{Y=y_j}(\{x_i\}) \) and obtained by dividing each row of the transpose of $M$ by the sum of its entries is of the following form
\[
M^* = \begin{pmatrix}
0 & \textcolor{red}{1} & 0 & 0\\
\textcolor{red}{1} & 0 & 0& 0 \\
\star & \star & \star & \star \\
0 & 0 & \textcolor{red}{1} & 0 \\
\star & \star & \star & \star \\
0 & 0 & 0 & \textcolor{red}{1} \\
\end{pmatrix}.
\]
It is therefore natural to introduce the Borel set $$D=\{y\in \mathcal{Y}:\pi_{Y=y}\mbox{ is a Dirac mass}\}.$$
We see that when $\mu$ and $\nu$ are finitely supported, then \eqref{cnssupfin} is equivalent to
\begin{align}
   \mu(dx)\text{-a.e.},\quad \pi_{X=x}(D)>0. \tag{$\mathcal{D}$}\label{cs}
\end{align}
This condition implies that $\nu(D)=\pi(\R\times D)=\int_{x\in\R}\pi_{X=x}(D)\,\mu(dx)>0.$ Not surprisingly, \eqref{cs} turns out to be sufficient for \eqref{egtoutf} to hold in general: intuitively, under \eqref{cs} one can choose the image of $\{y\in D:\pi_{Y=y}=\delta_{x}\}$ by $g$ to ensure that $\E[g(Y)|X=x]=f(x)$ and this does not influence $\E[g(Y)|X=x']$ for $x'\in\R\setminus\{x\}$.  This intuition is formalized in Proposition \ref{propcs}
below. The necessity of the sufficient condition \eqref{cs} is therefore a very natural question. It turns out that \eqref{cs} and \eqref{egtoutf} are equivalent when $\mu$ or $\nu$ are discrete. A more general framework ensuring equivalence is given in Corollary \ref{coreq}. But we are able to exhibit a probability measure $\pi$ on $\R^2$ such that \eqref{egtoutf} holds while $\nu(D)=0$, which shows that the sufficient condition \eqref{cs} is not always necessary. In this example, $\nu(dy)$-a.e., $\pi_{Y=y}$ is supported on two points and $\mu(dx)$-a.e., $\pi_{X=x}$ gives weight to $y$s such that these two points are arbitrarily close. The intuition that for \eqref{egtoutf} to hold, $\mu(dx)$-a.e., $\pi_{X=x}$ needs to give weight to points $y$ such that the support of $\pi_{Y=y}$ is ``not too large'' is confirmed by the fact that \eqref{egtoutf} does not hold when\begin{itemize}
\item  $\nu(dy)$-a.e., the component of $\pi_{Y=y}$ is absolutely continuous with respect to the Lebesgue measure is non-zero,
\item or for some $\varepsilon>0$,\, $\mu(\{x\in\R : \pi_{X=x}(\{y\in \mathcal{Y}:\pi_{Y=y}((x_\varepsilon,x_\varepsilon+\varepsilon])=1\mbox{ for some }x_\varepsilon\in\R\}) = 0\}) > 0$.
 \end{itemize} 

Our main results are stated and proved in Section 2. We may observe that \eqref{egtoutf} implies in particular the surjectivity of the operator
\[
T : L^1(\nu) \to L^1(\mu),
\]
defined by
\begin{equation}
Tg(x) = \int_{y \in \mathcal{Y}} g(y) \, \pi_{X=x}(dy) = \mathbb{E}[g(Y) | X = x].
\label{eq:def_T_operator}
\end{equation}
This motivates us, in Section~3, to relax the sign constraint on \( g \) and to study the surjectivity of \( T \) independently. It allows us to better understand which constraints on \( \pi \) arise from surjectivity alone, or from the sign constraint, for \eqref{egtoutf} to hold.
 
\section{Surjectivity with sign constraint}
\subsection{Preliminary results}
Before stating our main results, let us first state and prove some elementary preliminary results, including the general sufficiency of the condition  \eqref{cs}.
\begin{prop2}\label{propcs}
  The condition \eqref{cs} implies the representation property \eqref{egtoutf}.
 \end{prop2}
 \begin{rk}\label{remnud1}
   When $\nu(D)=1$, then $\int_{x\in\R}\, \pi_{X=x}(D^c)\, \mu(dx)=\nu(D^c)=0$, so $\mu(\{x\in\R:\pi_{X=x}(D)=1\})=1$. This implies \eqref{cs}. Moreover, for $\phi:D\to\R$ such that $\pi_{Y=y}=\delta_{\phi(y)}$ when $y\in D$, we then have $\pi(\{(\phi(y),y):y\in D\})=1$.
 \end{rk}
 \begin{proof}
   Let $\phi:D\to\R$ be such that for $y\in D$, $\pi_{Y=y}=\delta_{\phi(y)}$. We have
 \begin{align*}
   \int_{x\in\R}\pi_{X=x}(D\setminus\phi^{-1}(\{x\}))\,\mu(dx)
   =&\int_{(x,y)\in\R\times \mathcal{Y}}\mathbf{1}_D(y)\,\mathbf{1}_{\{\phi(y)\ne x\}}\,\pi(dx,dy)\\
   =&\int_{y\in \mathcal{Y}}\pi_{Y=y}(\R\setminus\{\phi(y)\})\,\mathbf{1}_D(y)\,\nu(dy)=0,
 \end{align*}
 so that \begin{equation}
\mu(dx)\text{-a.e.,} \quad \pi_{X = x}\big(D \setminus \phi^{-1}(\{x\})\big) = 0
\label{intcal}
\end{equation} and $\pi_{X=x}(\phi^{-1}(\{x\}))=\pi_{X=x}(D)$. 
 For $f\in L^1_+(\mu)$, the mesurable function $$g(y)=\frac{\mathbf{1}_D(y)f(\phi(y))}{\pi_{X=\phi(y)}(\phi^{-1}(\{\phi(y)\}))},$$ is non-negative and such that $\mathbf{1}_{\{\pi_{X=x}(D)>0\}}\,\mu(dx)\text{-a.e.}$,
\begin{align*}
  \int_{y\in \mathcal{Y}}g(y)\,\pi_{X=x}(dy)&=\int_{y\in \mathcal{Y}} \mathbf{1}_{\phi^{-1}(\{x\})}(y)\frac{f(\phi(y))}{\pi_{X=\phi(y)}(\phi^{-1}(\{\phi(y)\}))}\,\pi_{X=x}(dy)\\&=\frac{f(x)}{\pi_{X=x}(\phi^{-1}(\{x\}))}\int_{y\in \mathcal{Y}}\mathbf{1}_{\phi^{-1}(\{x\})}(y)\,\pi_{X=x}(dy)=f(x),
\end{align*}
while $\mathbf{1}_{\{\pi_{X=x}(D)=0\}}\,\mu(dx)\text {-a.e.}$, $\int_{y\in \mathcal{Y}}g(y)\,\pi_{X=x}(dy)=0$.
Hence $$\mu(dx)\text{-a.e.,} \quad \int_{y\in \mathcal{Y}} g(y)\, \pi_{X=x}(dy)=\mathbf{1}_{\{\pi_{X=x}(D)>0\}}\, f(x),$$
and $\int_{y\in \mathcal{Y}}g(y)\,\nu(dy)=\int_{x\in\R}\mathbf{1}_{\{\pi_{X=x}(D)>0\}}\, f(x)\,\mu(dx)\le \int_{x\in\R} f(x)\,\mu(dx)$ so that $g\in L^1_+(\nu)$.
Under \eqref{cs}, $\mu(dx)\text{-a.e.,}$ $\mathbf{1}_{\{\pi_{X=x}(D)>0\}}=1$ and  \eqref{egtoutf} holds.\end{proof}
\begin{rk} Note that under \eqref{cs}, $g$ may not be unique. 
   If there exists $\tilde D\in\mathcal{B}(\R)$ such that $\tilde D\subset D$, $\nu(\tilde D)<\nu(D)$ and $\mu(dx)$\text{-a.e.}, $\pi_{X=x}(\tilde D)>0$, then $g(y)=\frac{\mathbf{1}_{\tilde D}(y)\,f(\phi(y))}{\pi_{X=\phi(y)}(\tilde D\cap \phi^{-1}(\{\phi(y)\}))}$  also is such that \(\mu(dx)\)\text{-a.e.},\;\(f(x)=\int_{y\in \mathcal{Y}}g(y)\pi_{X=x}(dy).\)
\end{rk}
\begin{prop2}\label{propcndisc} Under \eqref{egtoutf}, for all $x\in\R$ such that $\mu(\{x\})>0$, $\pi_{X=x}(D)>0$.
 \end{prop2}
 \begin{cor2} \label{defC}
If the Borel set
$
C = \{x \in \mathbb{R} : \mu(\{x\}) + \pi_{X=x}(D) = 0\}
$
satisfies \(\mu({C})=0\), in particular when $\mu$ is discrete, then \eqref{cs} $\Longleftrightarrow$ \eqref{egtoutf}.
\end{cor2}
\begin{proof}[Proof of Corollary \ref{defC}]
We have $$\mu\left(\{x \in \mathbb{R} : \pi_{X=x}(D) = 0\}\right)=\mu(C)+\mu\left(\{x \in \mathbb{R} : \mu(\{x\})>0\mbox{ and }\pi_{X=x}(D) = 0\}\right).$$
If $\mu(C)=0$, then either $\mu\left(\{x \in \mathbb{R} : \mu(\{x\})>0\mbox{ and }\pi_{X=x}(D) = 0\}\right)>0$, in which case, according to Proposition \ref{propcndisc}, \eqref{egtoutf} does not hold, or $\mu\left(\{x \in \mathbb{R} : \pi_{X=x}(D) = 0\}\right)=0$, i.e., \eqref{cs} holds.
\end{proof}   \begin{proof}[Proof of Proposition \ref{propcndisc}]
 Let $z\in\R$ be such that $\mu(\{z\})>0$. Then \eqref{egfdon} for the choice $f(x)=\mathbf{1}_{\{z\}}(x)$ ensures that $B=g^{-1}((0,+\infty))$ is such that
 \[
 \pi_{X=z}(B)>0 \text{ and } \int_{x\in\R} \mathbf{1}_{\{x\ne z\}}\,\pi_{X=x}(B)\,\mu(dx)=0. 
 \]
 Then 
 \[
 \nu(B)=\pi(\R\times B)=\int_{x\in\R}\pi_{X=x}(B)\, \mu(dx)=\pi_{X=z}(B)\,\mu(\{z\})=\pi(\{z\}\times B)
 \]
 so that $\mathbf{1}_B(y)\,\nu(dy)=\mu(\{z\})\,\mathbf{1}_B(y)\, \pi_{X=z}(dy)$. 
 As a consequence, \begin{align*}
   0=\pi(\{\R\setminus\{z\}\}\times B)=\int_{y\in \mathcal{Y}}\pi_{Y=y}(\R\setminus\{z\})\,\mathbf{1}_{B}(y)\, \nu(dy)= \mu(\{z\})\int_{y\in \mathcal{Y}}\pi_{Y=y}(\R\setminus\{z\})\,\mathbf{1}_{B}(y)\, \pi_{X=z}(dy),\end{align*} and $\mathbf{1}_{B}(y)\, \pi_{X=z}(dy)$-a.e., $\pi_{Y=y}=\delta_{z}$, i.e., $y\in D$. This ensures that $\pi_{X=z}(D)\ge \pi_{X=z}(B)>0$.
\end{proof}
\begin{prop2}\label{propcneps}
If for some \(\varepsilon > 0\), \(\mu(\{x \in \mathbb{R} : \mu(\{x\}) + \pi_{X=x}(D_\varepsilon) = 0\}) > 0\) with $$D_\varepsilon=\{y\in \mathcal{Y}:\pi_{Y=y}((x_\varepsilon,x_\varepsilon+\varepsilon])=1\mbox{ for some }x_\varepsilon\in\R\},$$ then \eqref{egtoutf} does not hold.
\end{prop2}
Note that the same statement with $D_\varepsilon$ replaced by $D$ would, combined with Proposition \ref{propcs} and Corollary \ref{defC}, ensure the equivalence between \eqref{cs} and \eqref{egtoutf}. Of course, $\mu(\{x \in \mathbb{R}:\pi_{X=x}(D_\varepsilon)=0\})>0$ is stronger than $\mu(\{x \in \mathbb{R}:\pi_{X=x}(D)=0\})>0$. 
\begin{cor2} \label{Depsilon}
If there exists $\varepsilon>0$ such that $\mathbf{1}_{\{\pi_{X=x}(D)=0\}}\,\mu(dx)$-a.e., $\pi_{X=x}(D_\varepsilon)=0$, and in particular when $\nu(D_\varepsilon\setminus D)=0$ for some $\varepsilon>0$, then \eqref{cs} $\Longleftrightarrow$ \eqref{egtoutf}.
\end{cor2}
\begin{proof}[Proof of Proposition \ref{propcneps}]
There exists some \(k \in \mathbb{Z}\) such that the Borel set
\[
A = (k\varepsilon, (k+1)\varepsilon] \cap \{x \in \mathbb{R} : \mu(\{x\}) + \pi_{X=x}(D_\varepsilon) = 0\} ,
\]
satisfies \(\mu(A) > 0\). Moreover,
\[
0 = \int_{x \in A} \pi_{X=x}(D_\varepsilon) \, \mu(dx) = \pi(A \times D_\varepsilon) = \int_{y \in \mathcal{Y}} \pi_{Y=y}(A)\,\mathbf{1}_{D_\varepsilon}(y) \, \nu(dy).
\]
Since, by the definition of \(D_\varepsilon\), \(\pi_{Y=y}((k\varepsilon, (k+1)\varepsilon]) < 1\) for \(y \notin D_\varepsilon\), we deduce that \(\nu(dy)\)-a.e., \(\pi_{Y=y}(A) < 1\).
Proposition \ref{propcn} ensures that \eqref{egtoutf} does not hold.\end{proof}

\subsection{Main results}
We recall that $C = \{x \in \mathbb{R} : \mu(\{x\}) + \pi_{X=x}(D) = 0\}$ and
$D_\varepsilon=\{y\in \mathcal{Y}:\pi_{Y=y}((x_\varepsilon,x_\varepsilon+\varepsilon])=1\mbox{ for some }x_\varepsilon\in\R\}$ for $\varepsilon>0$.\begin{thm2} \label{mainsuffisant}
Assume that there exist \(\varepsilon > 0\) and
\begin{itemize}
    \item a diffuse probability measure \(\eta\) on $\mathbb{R}$,
    \item a subset ${\mathcal X}$ of $\R$ with at most countable closure,
\end{itemize}
such that \(\mathbf{1}_{\{y \in D_{\varepsilon} \cap D^c:\text{ } \pi_{Y=y}(C) = 1\}}\, \nu(dy)\)-a.e.,
at least one of the following holds:
\begin{itemize}
    \item[(i)] the cumulative distribution function \(F_{Y=y}\) of $\pi_{Y=y}$ is increasing on some interval of positive length,
    \item[(ii)] there exists a probability measure $\gamma_y$ on $\mathbb{R},\text{ such that }
\gamma_y \ll \pi_{Y=y}\text{ and }
\gamma_y \ll \eta,$
    \item[(iii)] there exists \(x\in{\mathcal X}\) such that \(F_{Y=y}(x) \in (0, 1)\).
\end{itemize}
Then $\mu(C)>0$ implies that \eqref{egtoutf} does not hold. 
\end{thm2}
The proof of Theorem \ref{mainsuffisant} is given in Section \ref{sec:mainsuffisant}.
Suppose that $\nu(dy)\text{-a.e.}$, the component of $\pi_{Y=y}$ absolutely continuous with respect to the Lebesgue measure is non-zero.
Then $\mu$ is not discrete, so that $\mu(C)>0$. We can thus choose $\eta$ as the standard Gaussian distribution, to deduce the following corollary.
\begin{cor2}\label{Lebesguedensity}
   If $\nu(dy)\text{-a.e.}$, the component of $\pi_{Y=y}$ absolutely continuous with respect to the Lebesgue measure is non-zero, in particular when ${\mathcal Y}=\R^d$ and $\pi$ has a density with respect to the Lebesgue measure on $\R^{1+d}$, then \eqref{egtoutf} does not hold.
\end{cor2}
Using Proposition \ref{propcs} and Corollary \ref{defC}, we also deduce the following corollaries.
\begin{cor2}\label{coreq}
   Under the assumptions of Theorem \ref{mainsuffisant}, \eqref{cs} \(\Longleftrightarrow\) \eqref{egtoutf}.
 \end{cor2}
 
 \begin{cor2}\label{nudisc}
   If \( \nu \) is discrete, then \eqref{cs} \(\Longleftrightarrow\) \eqref{egtoutf}.
 \end{cor2}
 \begin{rk}\label{munudiscrete}
  Under \eqref{cs}, \(\mu\) is equivalent to \(\phi_{\#}(\mathbf{1}_D \, \nu)   \) for $\phi:D\to\R$ such that for $y\in D$, $\pi_{Y=y}=\delta_{\phi(y)}$. Indeed, applying \eqref{intcal} for the third and last equalities, we have for $A \in \mathcal{B}(\mathbb{R})$,\begin{align*}
\phi_{\#}(\mathbf{1}_D \, \nu)(A) &= \nu(\phi^{-1}(A)) = \int_{x \in \mathbb{R}} \pi_{X=x}(\phi^{-1}(A)) \, \mu(dx) = \int_{x \in \mathbb{R}} \pi_{X=x}(\phi^{-1}(A) \cap \phi^{-1}(\{x\})) \, \mu(dx)\\
&= \int_{x \in A} \pi_{X=x}(\phi^{-1}(\{x\})) \, \mu(dx)=\int_{x \in A} \pi_{X=x}(D) \, \mu(dx).
\end{align*}
With Corollary \ref{nudisc}, we deduce that if $\nu$ is discrete and (\ref{egtoutf}) holds, then $\mu$ is discrete.
 
 \end{rk}\begin{proof}[Proof of Corollary \ref{nudisc}]
   Either \( \mu \) is discrete and the equivalence follows from Corollary \ref{defC} or its diffuse component $\mathbf{1}_{\{\mu(\{x\})=0\}}\,\mu(dx)$ is non-zero. In the latter case, since
    $$
    \nu(dy)\text{-a.e.,}\quad \pi_{Y=y} \ll \mu,
    $$
    it comes from the definition of $C$ that $$\mathbf{1}_{\{y \in D^c:\ \pi_{Y=y}(C) = 1\}}\,\nu(dy)\text{-a.e.,}\quad \pi_{Y=y}(dx) \ll \mathbf{1}_{\{\mu(\{x\})=0\}}\,\mu(dx),$$
 and the equivalence follows from Corollary \ref{coreq} applied with $\eta(dx)=\frac{\mathbf{1}_{\{\mu(\{x\})=0\}}\mu(dx)}{\mu\left(\{z \in \mathbb{R}:\mu(\{z\})=0\}\right)}$ and $\gamma_y=\pi_{Y=y}$.
 \end{proof}
According to the next result, the general equivalence of \eqref{cs} and \eqref{egtoutf} fails.
\begin{thm2} \label{conter}
There exists a probability measure $\pi$ on $\R^2$ 
 such that $
\nu(D) = 0
$ (in particular (\ref{cs}) does not hold), $\mu$ and $\nu$ are absolutely continuous with respect to the Lebesgue measure, and
\eqref{egtoutf} holds.
\end{thm2}
The proof of Theorem \ref{conter} is given in Section \ref{sec:conter}.
\subsection{Proof of Theorem \ref{mainsuffisant}}\label{sec:mainsuffisant}
In the proof, we will contradict the following necessary condition for \eqref{egtoutf} to hold.\begin{prop2}
\label{propcn}   
If \eqref{egtoutf} holds, then for any Borel subset $A$ of $\{x\in\R:\pi_{X=x}(D)=0\}$ such that $\mu(A)>0$, we have $\nu\left(\{y\in D^c:\pi_{Y=y}(A)=1\}\right)>0$.
 \end{prop2}
\begin{proof}
   For $f=\mathbf{1}_A$, \eqref{egfdon} implies on the one hand that $\mathbf{1}_A(x)\,\mu(dx)$-a.e., 
    $$\pi_{X=x}(g^{-1}((0,+\infty))\cap D^c)=\pi_{X=x}(g^{-1}((0,+\infty)))>0$$
   so that 
    $$\nu(g^{-1}((0,+\infty))\cap D^c)\ge \int_{x\in A}\pi_{X=x}(g^{-1}((0,+\infty))\cap D^c)\,\mu(dx)>0.$$
On the other hand, $\mathbf{1}_{A^c}(x)\,\mu(dx)$-a.e., 
   $$\pi_{X=x}(g^{-1}((0,+\infty))\cap D^c)\le \pi_{X=x}(g^{-1}((0,+\infty)))=0$$ so that 
    $$\int_{y\in g^{-1}((0,+\infty))\cap D^c}\,\pi_{Y=y}(A^c)\,\nu(dy)=\int_{x\in A^c}\pi_{X=x}(g^{-1}((0,+\infty))\cap D^c)\,\mu(dx)=0,$$
    and $\mathbf{1}_{g^{-1}((0,+\infty))\cap D^c}(y)\,\nu(dy)$-a.e.,\quad$\pi_{Y=y}(A)=1-\pi_{Y=y}(A^c)=1$.
  \end{proof}
To contradict the necessary condition, we combine the following propositions which respectively deal with the three alternative properties satisfied \(\mathbf{1}_{\{y \in D_{\varepsilon} \cap D^c:\text{ } \pi_{Y=y}(C) = 1\}}\, \nu(dy)\)-a.e.\ in the assumptions of  Theorem \ref{mainsuffisant}.

  \begin{prop2} \label{croisssur}  Suppose that $\tilde A$ is a Borel subset of $C$ such that $\mu(\tilde A)>0$ and $B$ is a Borel subset of $\{y\in D^c:\pi_{Y=y}(\tilde A)=1\}$ such that $\mathbf{1}_{B}(y)\, \nu(dy)$-a.e., there exists an interval of positive length on which the cumulative distribution function of \(\pi_{Y=y}\), denoted by $F_{Y=y}$, is increasing. Then there exists a Borel subset $A$ of $\tilde A$ such that $\mu(A)>0$ and $\mathbf{1}_{B}(y)\, \nu(dy)$\text{-a.e.,} $\pi_{Y=y}(A)<1$.
\end{prop2}
\begin{proof}
  If $\nu(B)=0$, then we can choose $A=\tilde A$. Let us now suppose that $\nu(B)>0$. Let \((q_n)_{n \in \mathbb{N}}\) be a sequence that runs through \(\mathbb{Q}\).  
Since $\tilde A\subset C$, the probability measure 
\[
\mu_{\tilde A}(dx) = \frac{\mathbf{1}_{\tilde A}(x)}{\mu(\tilde A)} \mu(dx)
\]
has no atom and its cumulative distribution function is continuous. Hence, there exists a sequence \((\varepsilon_n)_{n \in \mathbb{N}}\) such that
\[
\forall n \in \mathbb{N},\text{ } \mu_{\tilde A}((q_n - \varepsilon_n, q_n + \varepsilon_n)) 
\leq  \frac{1}{2^{n+2}}.
\]
We define
\[
A = {\tilde A} \setminus \bigcup_{n \in \mathbb{N}^*} (q_n - \varepsilon_n, q_n + \varepsilon_n).
\]
We have
\begin{align*}
\mu(A) &\geq \mu({\tilde A}) \, \mu_{\tilde A}(A) \\
&= \mu({\tilde A}) 
\left(1 - \mu_{\tilde A}(\bigcup_{n \in \mathbb{N}} (q_n - \varepsilon_n, q_n + \varepsilon_n))\right) \\
&\geq \mu({\tilde A}) 
\left(1 - \sum_{n \in \mathbb{N}} \frac{1}{2^{n+2}}\right) \\
&= \frac{\mu({\tilde A})}{2} > 0.
\end{align*}
By assumption $\mathbf{1}_B(y)\, \nu(dy)$-a.e., there exist \(a_y < b_y\) such that \(F_{Y=y}\) is increasing on \([a_y, b_y]\). By density of \(\mathbb{Q}\) in \(\mathbb{R}\), there exists \(n_y \in \mathbb{N}\) such that \(a_y \leq q_{n_y} \leq b_y\). \\The intersection of \((q_{n_y} - \varepsilon_{n_y}, q_{n_y} + \varepsilon_{n_y})\) with \([a_y, b_y]\) is an interval of positive length, so that  \[
\pi_{Y=y}\left((q_{n_y} - \varepsilon_{n_y}, q_{n_y} + \varepsilon_{n_y})\right) > 0 \quad \mbox{and} \quad  \pi_{Y=y}(A)<1.\]
\end{proof}
\begin{prop2} \label{singlar} Let \( \eta \) be a diffuse probability measure and $(\gamma_y)_{y \in \mathcal{Y}}$ be a family of probability measures. Suppose that $\tilde A$ is a Borel subset of $C$ such that $\mu(\tilde A)>0$ and $B$ is a Borel subset of $\{y\in D^c:\pi_{Y=y}(\tilde A)=1\}$ such that $\mathbf{1}_{B}(y)\, \nu(dy)\text{-a.e., } \gamma_y \ll \pi_{Y=y} \text{ and }
\gamma_y \ll \eta.$ Then there exists a Borel subset $A$ of $\tilde A$ such that $\mu(A)>0$ and $\mathbf{1}_{B}(y)\, \nu(dy)$-a.e., $\pi_{Y=y}(A)<1$.
\end{prop2}
The proof of Proposition \ref{singlar} is given in Section \ref{sec:singlar}.
\begin{prop2}\label{hhhhhhh}
Let ${\mathcal X}$ be a subset of $\mathbb{R}$ with at most countable closure.
 Suppose that $\tilde A$ is a Borel subset of $C$ such that $\mu(\tilde A)>0$ and $B$ is a Borel subset of $\{y\in D^c:\pi_{Y=y}(\tilde A)=1\}$ such that $\mathbf{1}_{B}(y)\,\nu(dy) \text{-a.e., there exists } x\in{\mathcal X}\text{ such that } F_{Y=y}(x) \in (0,1)$. Then there exists a Borel subset $A$ of $\tilde A$ such that $\mu(A)>0$ and $\mathbf{1}_{B}(y)\, \nu(dy)$-a.e., $\pi_{Y=y}(A)<1$.

\end{prop2} 
The proof of Proposition \ref{hhhhhhh} is given in Section \ref{sec:hhhhhhh}.
\begin{proof}[Proof of Theorem \ref{mainsuffisant}]
 
Let us suppose that $\mu(C)>0$ and exhibit a Borel subset \(A\) of \(C\) such that
\[
\mu(A) > 0 \quad \text{and} \quad \nu\left(\{y \in D^c : \pi_{Y=y}(A) = 1\}\right) = 0,
\]
so that, by Proposition \ref{propcn}, \eqref{egtoutf} does not hold.\\
\textbf{Step 0:} Like in the proof of Proposition \ref{propcneps}, there exists some \(k \in \mathbb{Z}\) such that the Borel set
\(A_0 = (k\varepsilon, (k+1)\varepsilon] \cap C\) satisfies $\mu(A_0)>0$. By definition of $D_{\varepsilon}$, \(\mathbf{1}_{D^c \setminus D_{\varepsilon}}(y)\, \nu(dy)\)-a.e., \(\pi_{Y=y}(A_0) < 1\). Moreover, since $A_0\subset C$, $\mathbf{1}_{\{y \in D_\varepsilon\cap D^c:\text{ } \pi_{Y=y}(C) < 1\}} \, \nu(dy)\text{-a.e., } \pi_{Y=y}(A_0) \leq \pi_{Y=y}(C) < 1.$
\\
\textbf{Step 1:} Let \(B_1 = \{y \in D_{\varepsilon} \cap D^c : \pi_{Y=y}(C) = 1 \text{ and } F_{Y=y} \text{ is increasing on an interval with positive length} \}\).  
By Proposition \ref{croisssur} applied with $(\tilde A,B)=(A_0,B_1\cap\{y \in \mathcal{Y}:\pi_{Y=y}(A_0)=1\})$, we construct a Borel subset \(A_1\) of \(A_0\) such that \(\mu(A_1) > 0\) and \(\mathbf{1}_{B_1}(y)\, \nu(dy)\)-a.e., \(\pi_{Y=y}(A_1) < 1\).
\\
\textbf{Step 2:} Let \(B_2 = \{y \in D_{\varepsilon} \cap D^c : \pi_{Y=y}(C) = 1 \text{ and there exists a probability measure } \gamma_y\ll\eta, \text{ such that }
\gamma_y \ll \pi_{Y=y}\}\).  
By Proposition \ref{singlar} applied with $(\tilde A,B)=(A_1,B_2\cap\{y \in \mathcal{Y}:\pi_{Y=y}(A_1)=1\})$, we construct a Borel subset \(A_2\) of \(A_1\)  such that \(\mu(A_2) > 0\) and \(\mathbf{1}_{B_2}(y)\, \nu(dy)\)-a.e., \(\pi_{Y=y}(A_2) < 1\).
\\
\textbf{Step 3:} Let \(B_3 = \{y \in D_{\varepsilon} \cap D^c : \pi_{Y=y}(C) = 1 \text{ and } \exists x\in {\mathcal X}\;\text{such that }F_{Y=y}(x) \in (0, 1) \}\).  
By Proposition \ref{hhhhhhh} applied with $(\tilde A,B)=(A_2,B_3\cap\{y \in \mathcal{Y}:\pi_{Y=y}(A_2)=1\})$, we construct a Borel subset \(A\) of \(A_2\) such that \(\mu(A) > 0\) and \(\mathbf{1}_{B_3}(y)\, \nu(dy)\)-a.e., \(\pi_{Y=y}(A) < 1\). 

Since $A\subset A_2\subset A_1\subset A_0$, we then have, under the assumption of the theorem, $\mathbf{1}_{ D^c}(y) \, \nu(dy)\text{-a.e., } $
\begin{itemize}
\item  $y\in D^c \setminus D_{\varepsilon}$  and $\pi_{Y=y}(A)\le \pi_{Y=y}(A_0)<1$,
\item or $y\in D_\varepsilon\cap D^c$ satisfies $\pi_{Y=y}(C)<1$ and $\pi_{Y=y}(A)\le \pi_{Y=y}(A_0)<1$,
\item or $y\in B_1$ and $\pi_{Y=y}(A)\le \pi_{Y=y}(A_1)<1$,
\item or $y\in B_2$ and $\pi_{Y=y}(A)\le \pi_{Y=y}(A_2)<1$,
\item or $y\in B_3$ and $\pi_{Y=y}(A)<1$.\end{itemize}
\end{proof}
\label{dec}
The following proposition holds in a framework of limited operational value, in which \eqref{egtoutf} is equivalent to \eqref{propcs}. This result is used in the proof of Proposition \ref{hhhhhhh}.
\begin{prop2} \label{dec}
When there exists a countable family \((A_n)_{n \in \mathbb{N}}\) of disjoint Borel subsets of 
\(\{x \in \mathbb{R} : \pi_{X=x}(D) = 0\}\) such that 
\[
\sum_{n \in \mathbb{N}} \mu(A_n) = \mu(\{x \in \mathbb{R} : \pi_{X=x}(D) = 0\}),
\]
and for all \(n \in \mathbb{N}\), \(\mathbf{1}_{\{y \in D^c:\text{ } \pi_{Y=y}(C) = 1\}}\,\nu(dy)\)-a.e., \(\pi_{Y=y}(A_n) <1\), then $\mu(C)>0$ implies that \eqref{egtoutf} does not hold. Moreover, \eqref{cs} $\Longleftrightarrow$ \eqref{egtoutf}.
\end{prop2}
\begin{proof}
By Proposition \ref{propcs} and Corollary \ref{defC}, the second assertion follows from the first. 
Let us suppose that \(\mu(C)>0\).  By hypothesis, there exists $n \in \mathbb{N}$, such that $\mu(A_n\cap C)>0$ and
\[
\mathbf{1}_{\{y \in D^c:\text{ } \pi_{Y=y}(C) = 1\}} \, \nu(dy) \text{-a.e.,}\quad \pi_{Y=y}(A_n \cap C) < 1.
\]
As we also have
\[
\mathbf{1}_{\{y \in D^c:\text{ }  \pi_{Y=y}(C) < 1\}} \, \nu(dy) \text{-a.e.,}\quad \pi_{Y=y}(A_n \cap C) \leq \pi_{Y=y}(C) < 1,
\]
it follows that
\[
\mathbf{1}_{D^c}(y) \, \nu(dy) \text{-a.e.,}\quad \pi_{Y=y}(A_n \cap C) < 1.
\]
By Proposition \ref{propcn}, \eqref{egtoutf} does not hold.
\end{proof}\subsection{Proof of Proposition \ref{singlar}}\label{sec:singlar}The proof of Proposition \ref{singlar} relies on the following lemma, the proof of which is postponed.
\begin{lemma} \label{final}
Let $\eta$ be a diffuse probability measure. Let $(f_y)_{y\in \mathcal{Y}}$ be a family of non-negative measurable functions from $\mathbb{R}$ to $\mathbb{R}$ such that the mapping
$
(x,y) \mapsto f_y(x)
$
is measurable and
\[
\nu(dy)\text{-a.e.,}\quad\int_{x \in \mathbb{R}} f_y(x)\, \eta(dx)>0.
\]
Then for each $\widehat{A} \in \mathcal{B}(\mathbb{R})$ such that $\eta(\widehat{A})>0$, there exists a Borel set $A \subset \widehat{A}$ such that $\eta(A)>0$ and $\nu(dy)$-a.e.,
\[
\int_{x \in A} f_y(x)\, \eta(dx) < \int_{x \in \mathbb{R}} f_y(x)\, \eta(dx).
\]
\end{lemma}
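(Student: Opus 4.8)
The plan is to treat the difficulty coming from the fact that $f_y$ can be large on tiny $\eta$-sets and small on large ones by first truncating and then using a countable covering argument, exactly in the spirit of the construction in Proposition \ref{croisssur}. First I would fix $\widehat A$ with $\eta(\widehat A)>0$ and, using $\sigma$-finiteness, reduce to the case where $f_y$ is bounded: since $\nu(dy)$-a.e.\ we have $\int_\R f_y\,d\eta>0$, there is $M\in\N^*$ such that the set $B_M=\{y:\int_{\{f_y\le M\}\cap\widehat A}f_y\,d\eta>0\}$ has $\nu(B_M)>0$; more carefully, I would replace the single $M$ by a measurable choice $M_y$ with $\int_{\{f_y\le M_y\}\cap\widehat A}f_y\,d\eta>0$ for $\nu$-a.e.\ $y$, which exists by monotone convergence applied to the increasing sequence $\int_{\{f_y\le m\}\cap\widehat A}f_y\,d\eta \uparrow \int_{\widehat A}f_y\,d\eta$ as $m\to\infty$, together with the fact (to be checked) that $\int_{\widehat A}f_y\,d\eta>0$ for $\nu$-a.e.\ $y$ — this last point needs an extra small argument since the hypothesis only gives $\int_\R f_y\,d\eta>0$, so I would actually prove the lemma first for $\widehat A=\R$ and then, in a second stage, remove from $\widehat A$ a suitable set; alternatively, and more cleanly, I would restrict attention from the start to the set $\widehat B=\{y:\int_{\widehat A}f_y\,d\eta>0\}$ and handle the complementary $\nu$-null-or-not dichotomy at the end.

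Next, assuming a measurable bound $M_y<\infty$ with $\int_{\{f_y\le M_y\}\cap\widehat A}f_y\,d\eta=:c_y>0$, I would run the covering trick on the diffuse measure $\eta$. Let $(q_n)_{n\in\N}$ enumerate $\Q$. Because $\eta$ is diffuse, its cumulative distribution function is continuous, so for each $n$ I can pick $\varepsilon_n>0$ with $\eta\big((q_n-\varepsilon_n,q_n+\varepsilon_n)\big)\le 2^{-(n+2)}$, hence $\eta\big(\bigcup_n(q_n-\varepsilon_n,q_n+\varepsilon_n)\big)\le 1/2$. Set $A=\widehat A\setminus\bigcup_{n}(q_n-\varepsilon_n,q_n+\varepsilon_n)$; then $\eta(A)\ge \eta(\widehat A)-1/2$, which may fail to be positive, so I would instead work on a suitable high-density portion: replace $\widehat A$ by $\widehat A\cap[-R,R]$ for $R$ large enough that $\eta(\widehat A\cap[-R,R])>1/2$ fails to be guaranteed either — so the correct normalization is to rescale the radii, choosing $\varepsilon_n$ with $\eta((q_n-\varepsilon_n,q_n+\varepsilon_n))\le \tfrac{1}{2}\eta(\widehat A)\,2^{-(n+1)}$, giving $\eta\big(\bigcup_n(\cdots)\big)\le \tfrac12\eta(\widehat A)$ and hence $\eta(A)\ge\tfrac12\eta(\widehat A)>0$, which is what I actually want.

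Finally I would verify the strict inequality $\nu(dy)$-a.e. For $\nu$-a.e.\ $y$, since $\int_{\widehat A}f_y\,d\eta>0$, either $\eta(\{x\in\widehat A:f_y(x)>0\})=0$, which is impossible, so there is a point of $\widehat A$ in the support of $f_y\,\eta$ restricted to $\widehat A$; by density of $\Q$ there is $n_y$ with a rational $q_{n_y}$ close to such a point, and then $\eta\big((q_{n_y}-\varepsilon_{n_y},q_{n_y}+\varepsilon_{n_y})\cap\{x\in\widehat A:f_y(x)>0\}\big)>0$ — this step is where I have to be careful, because I need the open interval around the rational to actually capture positive $f_y\,\eta$-mass inside $\widehat A$, which requires that the chosen point lie in the support of $\mathbf 1_{\widehat A}f_y\,\eta$ and that $\eta$ give positive mass to every neighborhood of that point; since the support is a closed set carrying full mass this is fine. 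Consequently $\int_{A}f_y\,d\eta<\int_{\widehat A}f_y\,d\eta\le\int_\R f_y\,d\eta$, with the middle inequality strict whenever $\eta$ restricted to $\widehat A\setminus A$ meets $\{f_y>0\}$. The main obstacle is precisely this last point: controlling, uniformly over a $\nu$-full set of $y$, that the countable family of deleted rational intervals removes positive $f_y\,\eta$-mass; the resolution is that "$F_{Y=y}$ increasing on an interval" in Proposition \ref{croisssur} is here replaced by "$\int_{\widehat A}f_y\,d\eta>0$", which already forces the support of $\mathbf 1_{\widehat A}f_y\,\eta$ to be nonempty, and any nonempty set meets some rational interval of arbitrarily small radius, so a single countable collection of radii works simultaneously for all such $y$. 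I would also record the reduction showing how the positivity hypothesis $\int_\R f_y\,d\eta>0$ transfers to $\int_{\widehat A}f_y\,d\eta>0$ for the relevant $y$: this is not automatic, so the statement as used will implicitly be for those $y$ with $\int_{\widehat A}f_y\,d\eta>0$, and on the complement (if of positive $\nu$-measure) the inequality $\int_A f_y\,d\eta=0<\int_\R f_y\,d\eta$ holds trivially.
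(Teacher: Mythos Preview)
Your plan has a genuine gap at the core step, and the analogy with Proposition~\ref{croisssur} breaks down precisely because the hypothesis ``$F_{Y=y}$ is increasing on an interval'' is no longer available.

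Concretely: you build $A=\widehat A\setminus U$ with $U=\bigcup_n(q_n-\varepsilon_n,q_n+\varepsilon_n)$ and then, for each $y$, you pick $x_0$ in the support of $\mathbf{1}_{\widehat A}f_y\,\eta$ and a rational $q_{n_y}$ ``close to'' $x_0$, hoping $(q_{n_y}-\varepsilon_{n_y},q_{n_y}+\varepsilon_{n_y})$ carries positive $\mathbf{1}_{\widehat A}f_y\,\eta$-mass. For this you need $x_0\in(q_{n_y}-\varepsilon_{n_y},q_{n_y}+\varepsilon_{n_y})$, i.e., $x_0\in U$. But the radii $\varepsilon_n$ are fixed \emph{before} you see $y$, and there is no reason $U$ should cover the support of $\mathbf{1}_{\widehat A}f_y\,\eta$. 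In fact $U$ can satisfy $\eta(U)=0$: take $\eta$ to be the Cantor measure translated by some $\alpha$ such that its support $K+\alpha$ is disjoint from $\Q$ (Lebesgue-a.e.\ $\alpha$ works, since $\Q-K$ has Lebesgue measure zero); then each $q_n\notin\operatorname{supp}(\eta)$, and choosing $\varepsilon_n$ small enough gives $\eta((q_n-\varepsilon_n,q_n+\varepsilon_n))=0$ for all $n$. With $\widehat A=\R$ and $f_y\equiv 1$ you get $\int_A f_y\,d\eta=\eta(\R\setminus U)=1=\int_\R f_y\,d\eta$, so the strict inequality fails. In Proposition~\ref{croisssur} this problem does not arise because an interval on which $F_{Y=y}$ is increasing forces any open subinterval (in particular some $(q_n-\varepsilon_n,q_n+\varepsilon_n)$) to have positive $\pi_{Y=y}$-mass; that structural assumption has no counterpart here.

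The paper's proof uses a different and essentially stronger tool: it constructs, for each level $n$, a \emph{mixing} sequence $(A^n_m)_m$ of Borel sets with $\eta(A^n_m)=1-q^n$ (built from i.i.d.\ uniforms obtained via digit expansions of $F_\eta$), so that $\eta(\tilde A\cap A^n_m)\to(1-q^n)\,\eta(\tilde A)$ for \emph{every} Borel $\tilde A$. Applying this with $\tilde A=\{f_y>0\}$ shows that for each fixed $n$ one can choose $m_n$ so that $\nu\{y:\int_{A^n_{m_n}}f_y\,d\eta=\int_\R f_y\,d\eta\}<1/n$, and then $A=\widehat A\cap\bigcap_n A^n_{m_n}$ works, with $\eta(A)>0$ guaranteed by the choice of $q$. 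The mixing property is exactly what replaces your attempted uniform covering; it works for arbitrary diffuse $\eta$, including singular ones, which your rational-interval construction cannot handle.
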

 \begin{proof}[Proof of Proposition \ref{singlar}.]
If $\nu(B)=0$, then we can choose $A=\tilde A$. Let us now suppose that $\nu(B)>0$. Up to replacing $\eta$ by 
$
\frac{\eta + \mu(\cdot \cap C)}{1 + \mu(C)},
$
we can suppose that $\mu(\cdot \cap C) \ll \eta$. We define the Borel set
\[
\widehat{A}
 = \{ x \in \tilde A : f_\mu(x) > 0 \},
\]
where $f_\mu$ is the density of $\mu(\cdot \cap C)$ with respect to $\eta$. Since \( \mu(\tilde A ) >0 \), it follows that \( \eta(\widehat{A}) > 0 \). We also denote by  $f_y$ the density of $\gamma_y$ with respect to $\eta$.
 By Lemma \ref{final}, there exists a Borel set \( A \subset \widehat{A} \) such that \( \eta(A) > 0 \) so that \( \mu(A) > 0 \) by definition of $\widehat{A}$ and
$$
\mathbf{1}_{B}(y) \, \nu(dy)\text{-a.e.,} \quad
\gamma_{y}(A) =\int_{x \in A} f_y(x) \, \eta(dx) < \int_{x \in \mathbb{R}} f_y(x) \, \eta(dx) =1.
$$
Since $\mathbf{1}_{B}(y) \, \nu(dy)\text{-a.e.,}$ $\gamma_y \ll \pi_{Y=y}$, it comes that
$$
\mathbf{1}_{B}(y) \, \nu(dy)\text{-a.e.,} \quad
\pi_{Y=y}(A) <1.
$$
\end{proof}
The proof of Lemma \ref{final} relies on the following lemma. It can be shown that for any diffuse probability measure, one can construct a mixing sequence of Borel sets with arbitrary constant mass. See \cite{Renyi1958} for a definition of mixing sequences and a study of its properties. In the rest of the article, $\lambda$ denotes the Lebesgue measure on $\mathbb{R}$.
\begin{lemma}\label{independetsequence_0}
Let \( \eta \) be a diffuse probability measure on \( \mathbb{R}\) and $a \in (0,1)$. Then there exists a sequence of Borel sets $(A_m)_{m \in \mathbb{N}}$ such that
\begin{enumerate}
    \item[(i)] for each \( m \in \mathbb{N} \), \( \eta(A_m) = a \),
    \item[(ii)] for all \( m, m' \in \mathbb{N} \) with \( m \neq m' \), we have
    \[
    \eta(A_m \cap A_{m'}) = a^2,
    \]
    \item[(iii)] for every \( \widehat{A} \in \mathcal{B}(\mathbb{R}) \),
    \[
    \lim_{m \to \infty} \eta(\widehat{A} \cap A_m) = a \, \eta(\widehat{A}).
    \]
\end{enumerate}
\end{lemma}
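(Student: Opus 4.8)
The plan is to construct the sequence directly inside $(\R,\mathcal B(\R),\eta)$ by an induction that makes each new set stochastically independent, with respect to $\eta$, of everything built so far together with an exhausting family of finite partitions. First I would fix a nested sequence $(\mathcal P_n)_{n\in\N}$ of finite Borel partitions of $\R$ — for instance the truncated dyadic grids $\mathcal P_n=\{(-\infty,-n]\}\cup\{(k2^{-n},(k+1)2^{-n}]:-n2^n\le k<n2^n\}\cup\{(n,+\infty)\}$ — chosen so that $\bigcup_n\sigma(\mathcal P_n)$ is an algebra generating $\mathcal B(\R)$. Two standard ingredients will be invoked: (a) since $\eta$ is diffuse, for any Borel set $P$ and any $c\in[0,\eta(P)]$ there is a Borel subset $P'\subseteq P$ with $\eta(P')=c$ (Sierpi\'nski's theorem on the range of a non-atomic measure); and (b) an algebra generating a $\sigma$-algebra is dense in it for the pseudometric $(\widehat A,B)\mapsto\eta(\widehat A\triangle B)$ (a monotone-class argument).

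Then I would define $(A_m)_{m\in\N}$ recursively. Having chosen $A_1,\dots,A_{m-1}$, let $\mathcal Q_m$ be the finite partition of $\R$ into the atoms of $\sigma(\mathcal P_m)\vee\sigma(A_1)\vee\dots\vee\sigma(A_{m-1})$; using (a) on each atom $Q\in\mathcal Q_m$, pick $A_m$ so that $\eta(A_m\cap Q)=a\,\eta(Q)$ for every $Q\in\mathcal Q_m$. Summing over $Q\in\mathcal Q_m$ gives $\eta(A_m)=a$, which is (i). For (ii), if $m'<m$ then $A_{m'}$ is a union of atoms of $\mathcal Q_m$, so summing $\eta(A_m\cap Q)=a\,\eta(Q)$ over those atoms yields $\eta(A_m\cap A_{m'})=a\,\eta(A_{m'})=a^2$.

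For (iii), note first that if $B\in\sigma(\mathcal P_n)$ then, for every $m\ge n$, $B$ is a union of atoms of $\mathcal Q_m$ because $\sigma(\mathcal P_n)\subseteq\sigma(\mathcal P_m)\subseteq\sigma(\mathcal Q_m)$; hence $\eta(A_m\cap B)=a\,\eta(B)$ \emph{exactly}, so $\eta(A_m\cap B)\to a\,\eta(B)$ for every $B$ in the generating algebra $\bigcup_n\sigma(\mathcal P_n)$. For a general $\widehat A\in\mathcal B(\R)$ I would approximate it by such a $B$ via (b) and use
\[
\bigl|\eta(A_m\cap\widehat A)-a\,\eta(\widehat A)\bigr|\le(1+a)\,\eta(\widehat A\triangle B)+\bigl|\eta(A_m\cap B)-a\,\eta(B)\bigr|,
\]
letting $m\to\infty$ and then $\eta(\widehat A\triangle B)\to0$.

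The one point genuinely needing care — and the reason for the somewhat elaborate setup — is exactly that (iii) is required for \emph{every} Borel set, not merely for intervals or sets of a convenient form. This is what forces the partitions $\mathcal P_n$ to generate $\mathcal B(\R)$ and forces each $A_m$ to be built independent of $\mathcal P_m$ as well as of $A_1,\dots,A_{m-1}$; a construction based, say, on the binary digits of the quantile function of $\eta$ would give (i) and (ii) but fail (iii) (for instance when $\eta$ is uniform on $[0,1]\cup[2,3]$). An alternative route would be to transport, via the isomorphism-mod-$0$ theorem, to an infinite product of copies of $([0,1],\lambda)$, where the $A_m$ become genuine independent coordinate events; but the direct construction above is self-contained.
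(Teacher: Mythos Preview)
Your construction is correct and self-contained, but it follows a different route from the paper's and is more laborious than necessary. The paper first transports $\eta$ to Lebesgue measure on $[0,1]$ via the (continuous) cumulative distribution function $F_\eta$, builds there an i.i.d.\ sequence $(U_m)_{m\in\N}$ of uniform random variables by distributing the decimal digits of the identity through a bijection $\N\times\N^*\to\N^*$, sets $\tilde A_m=\{U_m\le a\}$, and pulls back to $A_m=F_\eta^{-1}(\tilde A_m)$. Properties (i) and (ii) are then immediate from independence. For (iii) the paper uses a one-line Hilbert-space argument: the centred indicators $(\mathbf 1_{A_m}-a)/\sqrt{a(1-a)}$ form an orthonormal system in $L^2(\eta)$, so by Bessel's inequality the inner products $\langle \mathbf 1_{\widehat A}-\eta(\widehat A),\mathbf 1_{A_m}-a\rangle=\eta(\widehat A\cap A_m)-a\,\eta(\widehat A)$ are square-summable and hence tend to~$0$.

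This last point deserves emphasis, because it shows that your elaborate handling of (iii) is unnecessary: \emph{(i) and (ii) alone already imply (iii)}. Indeed, (i)+(ii) say exactly that the family $(\mathbf 1_{A_m}-a)/\sqrt{a(1-a)}$ is orthonormal, and Bessel does the rest for every Borel $\widehat A$ at once. In particular your closing remark---that a digit-based construction ``would give (i) and (ii) but fail (iii)''---is mistaken; any sequence satisfying (i) and (ii) automatically satisfies (iii). Your nested-partition machinery and the density argument via $\eta(\widehat A\triangle B)$ are therefore correct but superfluous: once you have used Sierpi\'nski to make $A_m$ independent of $A_1,\dots,A_{m-1}$ (without any reference to $\mathcal P_m$), you are done. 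What your approach does buy is that it avoids both the quantile transport and any Hilbert-space input, staying purely within elementary measure theory.
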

\begin{proof}
Let \( a \in (0,1) \). We first construct a family \( (\tilde{A}_m)_{m \in \mathbb{N}} \) of sets in \( \mathcal{B}([0,1]) \) such that
\begin{itemize}
    \item for all \( m \in \mathbb{N} \), we have \( \lambda(\tilde{A}_m) = a \),
    \item for all \( m, m' \in \mathbb{N} \) with \( m \neq m' \), we have
    \[
    \lambda(\tilde{A}_m \cap \tilde{A}_{m'}) = a^2.
    \]
\end{itemize}
On the probability space $([0,1],{\mathcal B}([0,1]),\mathbf{1}_{[0,1]}\lambda)$, we define the random variable \(U : x\mapsto x\). For $n\geq 1$, let
$
D_n = \lfloor 10^n U \rfloor - 10 \lfloor 10^{n-1} U \rfloor \in \{0, \dots, 9\}
$
be the \(n\)-th digit of \(U\). 
Let \(\varphi : \mathbb{N} \times \mathbb{N}^* \to \mathbb{N}^*\) be the bijection \(\varphi(m, p) = 2^m (2p - 1)\). Then, for \(m \in \mathbb{N}\), we set
\[
U_m = \sum_{p=1}^{+\infty} \frac{D_{\varphi(m, p)}}{10^p}.
\]
The random variables \((U_m)_{m \in \mathbb{N}}\) are independent and uniformly distributed on \([0,1]\).  For \( m \in \mathbb{N} \), we consider the Borel sets $\tilde{A}_m=\{x \in [0,1] : U_m(x) \leq a\}$. Now let \( \eta \) be a diffuse probability measure on \( (\mathbb{R}, \mathcal{B}(\mathbb{R})) \) with cumulative distribution function \( F_\eta \). We have the well-known identity
\[
(F_\eta)_{\#} \eta = \mathbf{1}_{[0,1]}\, \lambda,
\]
i.e.,
\[
\forall A \in \mathcal{B}([0,1]),\,\
\lambda(A) = \eta(F_\eta^{-1}(A)).
\]
The sequence of Borel sets \( (A_m)_{m \in \mathbb{N}} \) defined for every \( m \in \mathbb{N} \), by
$
A_m = F_\eta^{-1}(\tilde{A}_m)
$
satisfies (i) and (ii) since $A_m\cap A_{m'}=F_\eta^{-1}(\tilde{A}_m\cap \tilde A_{m'})$.
On the probability space $(\mathbb{R}, \mathcal{B}(\mathbb{R}),\eta)$, the random variables \( (\mathbf{1}_{A_m})_{m\in\N} \) are i.i.d.\ and follow the Bernoulli distribution with parameter \( a \).
For $\widehat{A} \in \mathcal{B}(\mathbb{R})$, by Bessel's inequality in the Hilbert space $L^2(\mathbb{R}, \mathcal{B}(\mathbb{R}), \eta)$ with the orthonormal family $\left(\frac{\mathbf{1}_{A_m}-a}{\sqrt{a(1-a)}}\right)_{m \in \mathbb{N}}$, we have
\[
\sum_{m \in \mathbb{N}}\mathbb{E}\left[(\mathbf{1}_{\widehat A}-\eta(\widehat A))\frac{\mathbf{1}_{A_m}-a}{\sqrt{a(1-a)}} \right]^2
\leq \mathbb{E}[(\mathbf{1}_{\widehat A}-\eta(\widehat A))^2]= \eta (1-\eta) \le\frac 1 4.
\]
Thus $\lim_{m \to \infty}
\mathbb{E}[(\mathbf{1}_{\widehat A}-\eta(\widehat A))(\mathbf{1}_{A_m}-a) ]
=
\lim_{m \to \infty} \eta(\widehat{A} \cap {A}_m) - a\,\eta(\widehat{A})
=0.
$\end{proof}\begin{proof}[Proof of Lemma \ref{final}.]
Let \( q \in (0,1) \) be such that
$
q \leq \frac{\eta(\widehat{A})}{2 + \eta(\widehat{A})}
$. For $n \in \mathbb{N}^*$, by Lemma \ref{independetsequence_0} applied with $a$ replaced by 
$
a_n = 1-q^n$, there exists a sequence of Borel sets
$
(A_{m}^n)_{m \in \mathbb{N}}
$
such that 
\begin{align*}
&\forall m \in \mathbb{N},\quad  \eta(A_{m}^n) = a_n, \\
&\forall \tilde{A} \in \mathcal{B}(\mathbb{R}),\quad  \lim_{m \to \infty} \eta(\tilde{A} \cap A_{m}^n) = a_n \, \eta(\tilde{A}).
\end{align*}
We have $\nu(dy)$-a.e., 
\[
\lim_{m \to \infty} \eta\Bigl(\{x \in \mathbb{R} : f_y(x) > 0\} \cap A_{m}^n\Bigr)
= a_n\, \eta\Bigl(\{x \in \mathbb{R} : f_y(x) > 0\}\Bigr) 
< \eta\Bigl(\{x \in \mathbb{R} : f_y(x) > 0\}\Bigr),
\]
so that
\[
\lim_{m \to \infty} \nu\left(\left\{ y \in \mathcal{Y} : \int_{x \in A_{m}^n} f_{y}(x) \, \eta(dx) = \int_{x \in \mathbb{R}} f_y(x) \, \eta(dx) \right\} \right) = 0.
\]
We can thus consider \( m_n \in \mathbb{N} \) such that the Borel set
\[
B_n=\left\{ y \in \mathcal{Y} : \int_{x \in A_{m_n}^n} f_y(x) \, \eta(dx) = \int_{x \in \mathbb{R}} f_y(x)\, \eta(dx) \right\},
\]
satisfies
$
\nu \left(B_n\right) < \frac{1}{n}.
$
Let $\widehat{A} \in \mathcal{B}(\mathbb{R})$ be such that $\eta(\widehat{A})>0$. We define the Borel set
\[
A = \widehat{A} \cap \bigcap_{n \in \mathbb{N}^*} A_{m_n}^n.
\]
Based on the definition of \( A_{m_n}^n \), we have
\begin{align*}
\eta(A) &= 1 - \eta(A^c) = 1 - \eta\Bigl(\widehat{A}^c \cup \bigcup_{n \in \mathbb{N}^*} (A_{m_n}^n)^c\Bigr) \\
           &\geq 1 - \eta(\widehat{A}^c) - \sum_{n \in \mathbb{N}^*} \eta\Bigl((A_{m_n}^n)^c\Bigr)\\
            &= 1 -(1-\eta(\widehat{A})) - \sum_{n \in \mathbb{N}^*} \left(1 - \eta\left(A_{m_n}^n\right)\right) \\
           &=\eta(\widehat{A}) - \sum_{n \in \mathbb{N}^*} q^n \\
           &\geq \eta(\widehat{A})-\frac{\eta(\widehat{A})}{2} > 0.
\end{align*}
We define the Borel set
$
B= \bigcap_{n \in \mathbb{N}^*} B_n.
$
Let $y \in B^c$. There exists $n \in \mathbb{N}^*$ such that $y \notin B_{n}$, so that
\[
\int_{x \in A} f_y(x)\, \eta(dx) 
\leq \int_{ x \in {A_{m_{n_{}}} ^{n_{}}  }} f_y(x)\, \eta(dx) < \int_{x \in \mathbb{R}} f_y(x)\, \eta(dx).
\]
Therefore,
\[
B^c \subset \left\{ y \in \mathcal{Y} : \int_{x \in A} f_y(x)\, \eta(dx) < \int_{x \in \mathbb{R}} f_y(x)\, \eta(dx) \right\}.
\]
Moreover, for all $n \in \mathbb{N}^*,\,\nu(B) \leq \nu(B_n) \leq \frac{1}{n}.
$
It follows that \( \nu(B^c) = 1 \).
\end{proof}
\subsection{Proof of Proposition \ref{hhhhhhh}}\label{sec:hhhhhhh}
\begin{def2}\label{13}
\begin{itemize}
\item[(i)] \( x \in \mathbb{R} \) is called an accumulation point of \( {\mathcal X} \) if for every \(\varepsilon > 0\), the interval \((x-\varepsilon, x+\varepsilon)\)  contains infinitely many points of \( {\mathcal X}\).

\item[(ii)] \( x \in \mathbb{R} \) is called a right accumulation point of \( {\mathcal X} \) if for every \(\varepsilon > 0\), the interval \([x, x+\varepsilon)\) contains infinitely many points of \( {\mathcal X} \).

\item[(iii)] \( x \in \mathbb{R} \) is called a left accumulation point of \( {\mathcal X} \) if for every \(\varepsilon > 0\), the interval \((x-\varepsilon, x]\) contains infinitely many points of \( {\mathcal X} \).

\end{itemize}
\end{def2}

In the following, $\bar{\mathcal X}$ denotes the closure of ${\mathcal X}$ and $\tilde{\mathcal X}=\{x\in\bar{\mathcal X}:x\mbox{ is not a right accumulation point of } {\mathcal X}\}$.
For \(x < \sup{\mathcal X}\) such that \(x\) is not a right accumulation point of \({\mathcal X}\), let \( L_x = \inf \left\{ y \in {\mathcal X} : x<y \right\} \) where the infimum is taken over a non-empty set.
The proof of Proposition \ref{hhhhhhh} relies on the next two lemmas, the first of which is obvious.

\begin{lemma}\label{lemdisj}
   We have $\bar{\mathcal X}\cap(-\infty,\inf{\mathcal X})=\bar{\mathcal X}\cap(\sup{\mathcal X},+\infty)=\emptyset$. Moreover, for \(x < \sup{\mathcal X}\) such that \(x\) is not a right accumulation point of \({\mathcal X}\), we have $L_x> x$, $L_x\in\bar{\mathcal X}$ and $(x,L_x)\cap\bar{\mathcal X}=\emptyset$.
\end{lemma}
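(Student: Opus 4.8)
The plan is to argue directly from the definitions, since every assertion reduces to an elementary observation about subsets of $\mathbb{R}$ and no real machinery is needed (this is the ``obvious'' lemma).

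For the first claim, I would observe that if $z < \inf{\mathcal X}$ then, with $\delta = \inf{\mathcal X} - z > 0$, the open interval $(z-\delta, z+\delta)$ is disjoint from ${\mathcal X}$, so $z \notin \bar{\mathcal X}$; this yields $\bar{\mathcal X}\cap(-\infty,\inf{\mathcal X}) = \emptyset$, and the symmetric argument at $\sup{\mathcal X}$ gives $\bar{\mathcal X}\cap(\sup{\mathcal X},+\infty) = \emptyset$.

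For the second claim, fix $x < \sup{\mathcal X}$ with $x$ not a right accumulation point of ${\mathcal X}$. Since $x < \sup{\mathcal X}$, the set $S = \{y \in {\mathcal X} : x < y\}$ is non-empty, so $L_x = \inf S$ is well-defined and $L_x \ge x$. To upgrade this to $L_x > x$, I use that $x$ is not a right accumulation point: there is $\varepsilon > 0$ such that $[x, x+\varepsilon)$ contains only finitely many points of ${\mathcal X}$. If $S \cap [x,x+\varepsilon) = \emptyset$ then $S \subset [x+\varepsilon, +\infty)$ and $L_x \ge x+\varepsilon > x$; otherwise $S \cap [x,x+\varepsilon)$ is a non-empty finite set of reals all strictly greater than $x$, its minimum is $> x$, and it equals $L_x$ because every element of $S$ outside $[x,x+\varepsilon)$ is $\ge x+\varepsilon$. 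Either way $L_x > x$. Next, $L_x \in \bar{\mathcal X}$ since the infimum of a non-empty subset of ${\mathcal X}$ is always in its closure (it is either attained in ${\mathcal X}$ or is a limit of a sequence in ${\mathcal X}$). Finally, by definition of $L_x$, every $y \in {\mathcal X}$ with $y > x$ satisfies $y \ge L_x$, so the open interval $(x, L_x)$ is disjoint from ${\mathcal X}$; being open, each of its points has a neighbourhood contained in it, hence disjoint from ${\mathcal X}$, so $(x,L_x)\cap\bar{\mathcal X} = \emptyset$.

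There is essentially no obstacle here; the only point that deserves a line of care is deducing $L_x > x$ rather than merely $L_x \ge x$, which is precisely where the finiteness of ${\mathcal X}\cap[x,x+\varepsilon)$ — i.e.\ the failure of $x$ to be a right accumulation point — is used.
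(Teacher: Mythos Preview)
Your proof is correct; the paper itself calls this lemma ``obvious'' and offers no argument, so your direct verification from the definitions is exactly what is implicitly intended. The only subtlety you rightly highlight---that failure of right accumulation is precisely what upgrades $L_x \ge x$ to $L_x > x$---is the sole point deserving a sentence.
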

\begin{lemma} \label{rrrrrrr}
Under the assumptions of Proposition \ref{hhhhhhh}, the following assertions hold:
\begin{itemize}
\item[(i)] 
\[
\mathbf{1}_B(y) \, \nu(dy)\text{-a.e.,} \quad \pi_{Y=y}((\sup{\mathcal X}, +\infty)^c)  > 0.
\]
\item[(ii)]
\[
\mathbf{1}_B(y) \, \nu(dy)\text{-a.e.,}\quad  \pi_{Y=y}(( -\infty, \inf{\mathcal X})^c)  > 0.
\]
\item[(iii)] For \( x < \sup{\mathcal X}\) \text{such that} \(x\) \text{is not a right accumulation point of }\({\mathcal X}\),
\[
\mathbf{1}_B(y)\, \nu(dy)\text{-a.e.,} \quad \pi_{Y=y}((x, L_x)^c)>0.
\]
\item[(iv)] For all \( x \in \mathbb{R} \),
\[
\mathbf{1}_B(y)\, \nu(dy)\text{-a.e.,}\quad  \pi_{Y=y}\left(\{ x \}^c\right) > 0.
\]
\end{itemize}
\end{lemma}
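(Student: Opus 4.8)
The plan is to extract from the hypothesis on $B$ the single fact that, $\mathbf{1}_{B}(y)\,\nu(dy)$-a.e., there is a point $x_y\in{\mathcal X}$ with $F_{Y=y}(x_y)\in(0,1)$, which unpacks into $\pi_{Y=y}\big((-\infty,x_y]\big)>0$ \emph{and} $\pi_{Y=y}\big((x_y,+\infty)\big)>0$. Each of (i)--(iii) then follows by checking that, according to the position of $x_y$ relative to $\sup{\mathcal X}$, $\inf{\mathcal X}$, or $L_x$, one of these two half-lines is contained in the complement set appearing in the statement. Assertion (iv) is of a different nature and I would dispatch it first: since $B\subseteq D^c$, for every $y\in B$ the measure $\pi_{Y=y}$ is not a Dirac mass, hence $\pi_{Y=y}(\{x\})<1$ and $\pi_{Y=y}(\{x\}^c)>0$ for every $x\in\mathbb{R}$, which in particular holds $\mathbf{1}_{B}(y)\,\nu(dy)$-a.e.

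For (i), fix $y$ in the full-measure set where $x_y$ exists. Since $x_y\in{\mathcal X}$ we have $x_y\le\sup{\mathcal X}$, so $(-\infty,x_y]\subseteq(\sup{\mathcal X},+\infty)^c$ and therefore $\pi_{Y=y}\big((\sup{\mathcal X},+\infty)^c\big)\ge F_{Y=y}(x_y)>0$. For (ii), $x_y\ge\inf{\mathcal X}$, so $(x_y,+\infty)\subseteq(-\infty,\inf{\mathcal X})^c$ and $\pi_{Y=y}\big((-\infty,\inf{\mathcal X})^c\big)\ge 1-F_{Y=y}(x_y)>0$. The degenerate cases $\sup{\mathcal X}=+\infty$ or $\inf{\mathcal X}=-\infty$ make the relevant complement all of $\mathbb{R}$ and are trivial.

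For (iii), fix $x<\sup{\mathcal X}$ that is not a right accumulation point of ${\mathcal X}$; by Lemma \ref{lemdisj} we have $L_x>x$, so $(x,L_x)$ is a genuine interval and $(x,L_x)^c=(-\infty,x]\cup[L_x,+\infty)$. Fix $y$ in the full-measure set. If $x_y\le x$, then $\pi_{Y=y}\big((x,L_x)^c\big)\ge\pi_{Y=y}\big((-\infty,x]\big)\ge F_{Y=y}(x_y)>0$. If $x_y>x$, then since $x_y\in{\mathcal X}$ we get $x_y\ge L_x=\inf\{z\in{\mathcal X}:z>x\}$, hence $(x_y,+\infty)\subseteq[L_x,+\infty)\subseteq(x,L_x)^c$ and $\pi_{Y=y}\big((x,L_x)^c\big)\ge 1-F_{Y=y}(x_y)>0$. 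In both cases the conclusion holds, so it holds $\mathbf{1}_{B}(y)\,\nu(dy)$-a.e.

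I do not expect a genuine obstacle here: the lemma is just the unpacking of ``$F_{Y=y}(x)\in(0,1)$'' into two positive-mass statements about complementary half-lines, combined with elementary set inclusions; note in particular that the full strength of the hypothesis on $B$ (namely $\pi_{Y=y}(\tilde A)=1$ with $\tilde A\subseteq C$) is not needed for (i)--(iv). The only point worth a word is measurability: for a fixed Borel set $A$ the map $y\mapsto\pi_{Y=y}(A)$ is measurable because $(y,\cdot)\mapsto\pi_{Y=y}$ is a probability kernel, and since $\bar{\mathcal X}$ is at most countable so is ${\mathcal X}$, whence the hypothesis set $\bigcup_{x\in{\mathcal X}}\{y:F_{Y=y}(x)\in(0,1)\}$ is a countable union of measurable sets; I would mention this in passing.
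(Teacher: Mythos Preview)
Your proof is correct and, for parts (i)--(iii), essentially identical to the paper's: extract a point $x_y\in{\mathcal X}$ with $F_{Y=y}(x_y)\in(0,1)$, then use the appropriate half-line inclusion. For part (iv) you take a different and cleaner route: the paper argues (iv) the same way as (i)--(iii), splitting according to whether $x\ge x_y$ or $x<x_y$ and using the half-line inclusions again, whereas you observe directly that $B\subseteq D^c$ forces $\pi_{Y=y}$ not to be a Dirac mass, so $\pi_{Y=y}(\{x\}^c)>0$ for every $x$. Your argument is simpler and sidesteps a small awkwardness in the paper's proof of (iv) (the inclusion $(-\infty,x_y]\subset\{x\}^c$ fails at the single point $x=x_y$, though the conclusion is easily rescued). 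Your measurability remark and the observation that the extra hypothesis $\pi_{Y=y}(\tilde A)=1$ is unused here are both correct.
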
\begin{proof}
\begin{itemize}
\item[(i)-(ii)] By hypothesis, \( \mathbf{1}_B(y)\, \nu(dy) \)-a.e., there exists \( x_y \in {\mathcal X} \) such that \( F_{Y=y}(x_y) \in (0,1),\) so that, since $(-\infty,x_y]\subset (\sup{\mathcal X},+\infty)^c$ and\\ $(x_y,+\infty)\subset (-\infty,\inf{\mathcal X})^c$, $\pi_{Y=y}((\sup{\mathcal X},+\infty)^c)>0$ and $\pi_{Y=y}((-\infty,\inf{\mathcal X})^c)>0$.\item[(iii)]Let $x < \sup{\mathcal X}$ be such that \(x\) is not a right accumulation point of \({\mathcal X}\). By definition of $L_x$, $(x,L_x)\cap{\mathcal X}=\emptyset$.
By hypothesis, \( \mathbf{1}_B(y)\, \nu(dy)\)-a.e., there exists \( x_y \in {\mathcal X} \) such that \( F_{Y=y}(x_{y}) \in (0,1), \) and either $x_y\le x$ so that $(-\infty, x_y]\subset (x,L_x)^c$ and $\pi_{Y=y}((x,L_x)^c)\ge F_{Y=y}(x_y)>0$ or $x_y\ge L_x$ so that $(x_y,+\infty)\subset (x,L_x)^c$ and $\pi_{Y=y}((x,L_x)^c)\ge 1-F_{Y=y}(x_y)>0$.\item[(iv)] Let \( x \in \mathbb{R} \).
By hypothesis \( \mathbf{1}_B(y)\, \nu(dy) \)-a.e., there exists \( x_y\in {\mathcal X} \) such that \( F_{Y=y}(x_{y}) \in (0,1)\) and either $x\ge x_y$ so that $(-\infty,x_y]\subset \{x\}^c$ and $\pi_{Y=y}\left(\{ x \}^c\right)\ge F_{Y=y}(x_y)>0$ or $x<x_y$ so that $(x_y,+\infty)\subset \{x\}^c$ and $\pi_{Y=y}\left(\{ x \}^c\right)\ge 1-F_{Y=y}(x_y)>0$. \end{itemize}
\end{proof}
\begin{proof}[Proof of Proposition \ref{hhhhhhh}]
  If $\nu(B)=0$, then we can choose $A=\tilde A$. Let us now suppose that $\nu(B)>0$. By Lemma \ref{lemdisj}, the intervals 
  \begin{equation}
   (-\infty,\inf{\mathcal X}),((x,L_x))_{x\in \tilde{\mathcal X}},\{x\}_{x\in\bar{\mathcal X}},(\sup{\mathcal X},+\infty)\label{parti}
  \end{equation}
  are disjoint. To prove that they form an at most countable partition of $\R$, it is enough to check that if $x\in\R\setminus \bar{\mathcal X}$ satisfies $\inf{\mathcal X}<x<\sup{\mathcal X}$, then there exists $\tilde x\in \tilde{\mathcal X}$ such that $x\in (\tilde x,L_{\tilde x})$. We set $\tilde x=\sup\{y\in{\mathcal X}:y<x\}$ where the supremum is taken over a non-empty set since $\inf{\mathcal X}<x$. Either $\tilde x\in {\mathcal X}$ or $\tilde x$ is a left accumulation point of ${\mathcal X}$. In particular, $\tilde x\in\bar{\mathcal X}$. Since $x\notin\bar{\mathcal X}$, we deduce that $\tilde x<x$. Moreover, by definition of $\tilde x$, $(\tilde x,x)\cap{\mathcal X}=\emptyset$ so that $(\tilde x,x)\cap\bar{\mathcal X}=\emptyset$ since $(\tilde x,x)$ is open and $(\tilde x,x]\cap\bar{\mathcal X}=\emptyset$ since $x\notin\bar{\mathcal X}$. This implies that $\tilde x$ is not a right accumulation point of ${\mathcal X}$ and $\tilde x\in\tilde{\mathcal X}$. Since, by Lemma \ref{lemdisj}, $L_{\tilde x}>\tilde x$ and $L_{\tilde x}\in\bar{\mathcal X}$, we also deduce from the equality $(\tilde x,x]\cap\bar{\mathcal X}=\emptyset$ that $L_{\tilde x}>x$.

  We conclude that the collection of intervals in \eqref{parti} is an at most countable partition of the real line.
  Since $\mu(\tilde{A}) > 0$, there exists an interval $I$ in this partition such that $A=\tilde{A}\cap I$ satisfies $\mu({A}) > 0$.
  By Lemma \ref{rrrrrrr}, $\mathbf{1}_B(y) \, \nu(dy)\text{-a.e.,} \ \pi_{Y=y}(I^c) > 0$ and $\pi_{Y=y}(A)<1$.
\end{proof}

\subsection{Proof of Theorem \ref{conter}}\label{sec:conter}
\begin{figure}[]
    \centering
    \includegraphics[width=0.8\textwidth]{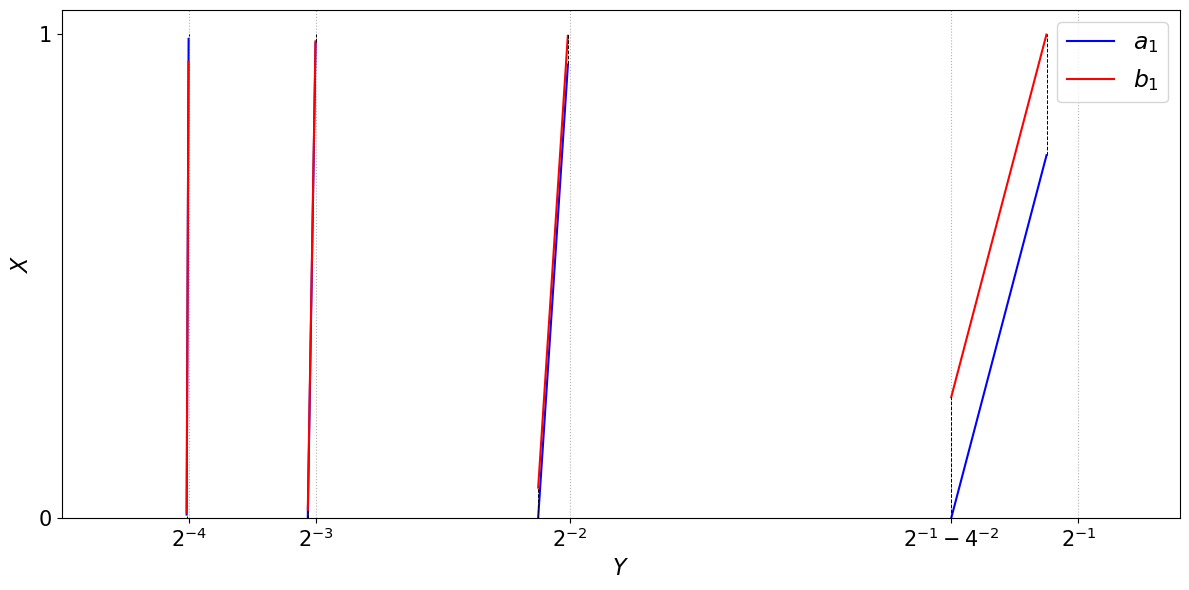} 
    \caption{Plot of functions \( a_n(y) \) and \( b_n(y) \) for \( n \) ranging from 1 to 4.}
    \label{fig:a_n_b_n2}
\end{figure}
We consider the law \( \pi \) on \( \mathbb{R}^2 \) defined by
\begin{align*}
\nu(dy) &= 15\, \mathbf{1}_{\bigcup_{m \in \mathbb{N}^*} \left[ 2^{-m} - 4^{-(m+1)},\, 2^{-m}- 4^{-(2m+1)} \right]}(y)\, dy
\end{align*}
\begin{align*}
&\nu(dy)\text{-a.e.,} \quad \pi_{Y = y}(dx) =
\frac{1}{2} \delta_{a(y)}(dx) + \frac{1}{2} \delta_{b(y)}(dx), 
\end{align*}
where
\[
a(y)
= \sum_{m \in \mathbb{N}^{*}}
a_m(y)\, \mathbf{1}_{\left[ 2^{-m} - 4^{-(m+1)},\, 2^{-m} - 4^{-(2m+1)} \right]}(y),
\quad
b(y)
= \sum_{m \in \mathbb{N}^{*}}
b_m(y)\, \mathbf{1}_{\left[ 2^{-m} - 4^{-(m+1)},\, 2^{-m} - 4^{-(2m+1)} \right]}(y)
\]
with
\[
\begin{cases}
a_m(y) = 4^{m+1}(y - 2^{-m}) + 1,\\[1em]
b_m(y) =  4^{m+1}\bigl(y - (2^{-m} - 4^{-(2m+1)})\bigr) + 1.
\end{cases}
\]
In particular, $a_m$ and $b_m$ are affine functions with the same slope and such that $b_m(y)-a_m(y)=4^{m+1} \times 4^{-(2m+1)} = {4^{-m}}$. The conditional laws \(\pi_{Y = y}\) are displayed as a function of \(y\) in Figure \ref{fig:a_n_b_n2}. By direct calculation, we check that
\begin{align}
\mu(dx)
&= f_{\mu}(x)\,dx
\label{eq:mu-density}
\\[1em]
f_\mu(x)
&= \frac{5}{4}\,\mathbf{1}_{\bigl[\tfrac14,\tfrac34\bigr]}(x)
  + \sum_{m \in \mathbb{N}^*}
    \frac{5}{8}\,(1+4^{-m})
    \,\mathbf{1}_{\bigl[4^{-(m+1)},\,4^{-m}\bigr]\cup\bigl[1-4^{-m},\,1-4^{-(m+1)}\bigr]}(x)
\label{eq:fmu-density}.
\end{align}
In particular, $\mu$ is equivalent to $\mathbf{1}_{[0,1]}\,\lambda$. Moreover, $\mu(dx)$-a.e., we have
\begin{align}
\pi_{X = x}(dy) &= \frac{1}{f_{\mu}(x)} 
\sum_{m\in\mathbb{N}^{*}} 4^{-(m+1)}
   \Bigl(
     \tfrac12\,\mathbf{1}_{[0, 1-4^{-(m+1)}]}(x)\,\delta_{a_m^{-1}(x)}(dy)
   + \tfrac12\,\mathbf{1}_{[4^{-(m+1)},1]}(x)\,\delta_{b_m^{-1}(x)}(dy)
   \Bigr).
\label{eq:pi-conditional}
\end{align}
\begin{rk}
The general idea of the proof is to handle the points of \(A\) in pairs. Indeed, for a given \(y_0 \in \bigcup_{m \in \mathbb{N}^*} \left[ 2^{-m} - 4^{-(m+1)},\, 2^{-m}- 4^{-(2m+1)} \right]\), the only \(x\)s such that 
$
\int_{\mathbb{R}} g(y)\,\pi_{X=x}(dy)
$
depends on \(g(y_0)\) are \(a(y_0)\) and \(b(y_0)\).
\end{rk}
 The proof of Theorem \ref{conter} relies on the following two results.
\begin{lemma}\label{mainlemma}
Let \( A  \in \mathcal{B}([0,1]) \). There exists a Borel set \(\widehat{A} \subset A\), a non-negative measurable function \(g_{\widehat{A}} \), and \(m\ge 3\) such that
\begin{itemize}\small
    \item[(i)] \(\lambda(\widehat{A}) \ge \frac{1}{80}\,\lambda(A),\)
    \item[(ii)] \(\mu(dx)\text{-a.e.,}\quad \int_{y \in \mathbb{R}} g_{\widehat{A}} (y)\,\pi_{X=x}(dy) = \mathbf{1}_{\widehat{A}}(x),\)
    \item[(iii)] \(\{y \in \mathbb{R} : g_{\widehat{A}} (y) > 0\} \subset \left[2^{-m} - 4^{-(m+1)},\, 2^{-m} - 4^{-(2m+1)}\right].\)
\end{itemize}
\end{lemma}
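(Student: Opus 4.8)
The plan is to make the condition (ii) fully explicit and then reverse‑engineer which sets $\widehat A$ it can produce. If $\lambda(A)=0$, take $\widehat A=\emptyset$, $g_{\widehat A}\equiv 0$, $m=3$; so assume $\lambda(A)>0$. Write $I_m=[2^{-m}-4^{-(m+1)},\,2^{-m}-4^{-(2m+1)}]$, so that $a_m$ is an increasing affine bijection $I_m\to[0,1-4^{-m}]$ and $b_m=a_m+4^{-m}$, hence $b_m^{-1}(x)=a_m^{-1}(x-4^{-m})$. For a non‑negative measurable $g$ supported in $I_m$, writing $g(y)=\phi(a_m(y))\,\mathbf 1_{I_m}(y)$ with $\phi\ge 0$ on $[0,1-4^{-m}]$, the expression \eqref{eq:pi-conditional} of $\pi_{X=x}$ gives, $\mu(dx)$-a.e.\ on $[0,1]$,
\[
\int_{\mathbb R}g(y)\,\pi_{X=x}(dy)=\frac{c_m}{f_\mu(x)}\Bigl(\phi(x)\,\mathbf 1_{[0,1-4^{-m}]}(x)+\phi(x-4^{-m})\,\mathbf 1_{[4^{-m},1]}(x)\Bigr)
\]
for an explicit constant $c_m>0$. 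So (ii) is a ``chain equation'' coupling the values of $x\mapsto\int g\,\pi_{X=x}(dy)$ at $x$ and $x+4^{-m}$, and the whole problem becomes: find $\widehat A\subseteq A$, of comparable measure, for which this chain equation admits a non‑negative solution $\phi$.

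Fix $m\ge 3$ and set $\delta=4^{-m}$, $N=4^m$, $B_k=[k\delta,(k+1)\delta)$, and group the blocks into the pairs $(B_{2i},B_{2i+1})$, $0\le i\le N/2-1$. The key observation is that if $S$ is a Borel subset of a union of \emph{good} even blocks --- those $B_{2i}$ such that $f_\mu$ is constant on the whole pair $B_{2i}\cup B_{2i+1}$ --- then $\phi=c_m^{-1}f_\mu\,\mathbf 1_S$ is non‑negative, the associated $g_{\widehat A}(y)=c_m^{-1}f_\mu(a_m(y))\,\mathbf 1_S(a_m(y))\,\mathbf 1_{I_m}(y)$ is supported in $I_m$, and, because $\mathbf 1_S$ and $\mathbf 1_{S+\delta}$ have disjoint supports and $f_\mu(x-\delta)=f_\mu(x)$ for $x\in S+\delta$, the displayed quantity equals $\mathbf 1_{S\cup(S+\delta)}(x)$; thus (ii) and (iii) hold with $\widehat A=S\cup(S+\delta)$, and $\lambda(\widehat A)=2\lambda(S)$. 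I would then check that the discontinuity points of $f_\mu$ are exactly $\{4^{-k}:k\ge1\}\cup\{1-4^{-k}:k\ge1\}$, and that for $m\ge3$ each of them is either an integer multiple of $2\delta$ --- hence a boundary of the pairing $(B_{2i},B_{2i+1})$ --- or lies in $[0,2\delta)\cup[1-2\delta,1)$. It follows that every pair $(B_{2i},B_{2i+1})$ with $1\le i\le N/2-2$ is good, so any Borel set $S\subseteq A\cap(A-\delta)\cap\bigcup_{i=1}^{N/2-2}B_{2i}$ produces an admissible triple $(\widehat A,g_{\widehat A},m)$ satisfying (ii)--(iii) with $\widehat A=S\cup(S+\delta)\subseteq A$.

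It remains to choose $S$ with $\lambda(S)$ a fixed fraction of $\lambda(A)$, i.e.\ to bound from below $P_+:=\lambda\bigl(A\cap(A-\delta)\cap\bigcup_iB_{2i}\bigr)$. Since $A\cap(A-\delta)$ could a priori be concentrated in the ``wrong'' (odd) blocks, I would also run the same construction for the shifted pairing $(B_{2i-1},B_{2i})$, whose relevant quantity is $P_-:=\lambda\bigl(A\cap(A+\delta)\cap\bigcup_iB_{2i}\bigr)$; the substitution $y=x+\delta$ yields the identity
\[
P_++P_-=\int\mathbf 1_A(x)\mathbf 1_A(x+\delta)\,dx=\lambda\bigl(A\cap(A-\delta)\bigr),
\]
which tends to $\lambda(A)$ as $m\to\infty$ by $L^1$‑continuity of translations. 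Hence for $m$ large $\max(P_+,P_-)\ge\tfrac14\lambda(A)$. The shifted pairing now has $O(m)$ bad pairs (sitting at the $O(m)$ discontinuities of $f_\mu$ inside $[2\delta,1-2\delta]$), but discarding those and the at most two ``boundary'' pairs costs only $O(m\,4^{-m})$; choosing $m$ large enough that this loss is $\le\tfrac1{16}\lambda(A)$ and that $\max(P_+,P_-)\ge\tfrac14\lambda(A)$, one of the two constructions gives $\lambda(S)\ge\tfrac3{16}\lambda(A)$, hence $\lambda(\widehat A)=2\lambda(S)\ge\tfrac38\lambda(A)\ge\tfrac1{80}\lambda(A)$, which is (i) (with $m\ge3$, as required).

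The routine ingredients ($L^1$‑continuity of translations, the change of variables, the bookkeeping for the discarded pairs) I expect to be painless. The main obstacle is the second paragraph: recognizing that the representable target sets are precisely the ``vertical dominoes'' $S\cup(S+\delta)$ supported in pairs on which $f_\mu$ is constant, and verifying that, for a suitable pairing, the discontinuities of $f_\mu$ fall on pair boundaries. A secondary subtlety is the parity obstruction --- that $A\cap(A-\delta)$ might live in the wrong blocks --- which is dissolved by the identity $P_++P_-=\lambda(A\cap(A-\delta))$ together with having the two pairings available.
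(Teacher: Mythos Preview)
Your plan is correct, and the core device coincides with the paper's: at scale $\delta=4^{-m}$ you pair adjacent $\delta$-blocks, take $\widehat A=S\cup(S+\delta)$ with $S$ the set of $x\in A$ in a ``left'' block whose partner $x+\delta$ is also in $A$, and set $g_{\widehat A}(y)=c_m^{-1}f_\mu(a_m(y))\mathbf 1_S(a_m(y))\mathbf 1_{I_m}(y)$. This is exactly the content of the paper's auxiliary Lemma~\ref{soussolution}: there $I_m^k=[k\alpha_m,(k+1)\alpha_m)$ with $\alpha_m=2\delta$, and $A_m^k=R(A\cap I_m^k)\cap(A\cap I_m^k)$ is precisely your $S_k\cup(S_k+\delta)$ with $S_k=A\cap(A-\delta)\cap B_{2k}$. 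Your verification that $f_\mu$ is constant on each good pair is the same computation as the paper's.

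Where you diverge is the measure lower bound. The paper localizes: Lebesgue differentiation produces density points of $A$, around which $A$ has density $\ge\rho=\tfrac{63}{64}$ in every subinterval; a Vitali cover selects finitely many such intervals, and inside each one every block satisfies $\lambda(A\cap I_m^k)\approx\alpha_m$, hence $\lambda(A_m^k)\ge 2\lambda(A\cap I_m^k)-\alpha_m\ge\tfrac{\alpha_m}{2}$; summing over the Vitali family yields the factor $\tfrac1{80}$. You argue globally instead: $L^1$-continuity of translations gives $\lambda(A\cap(A-\delta))\to\lambda(A)$ directly, and your identity $P_++P_-=\lambda(A\cap(A-\delta))$ dissolves the parity obstruction by forcing one of the two pairings to carry $\ge\tfrac14\lambda(A)$, after which discarding the $O(m)$ bad pairs costs $O(m\,4^{-m})$. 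Your route avoids Vitali's lemma and the density threshold entirely and yields a better constant ($\tfrac38$ in place of $\tfrac1{80}$); the paper's route makes the location of $\widehat A$ more explicit (it sits inside the Vitali balls) but is otherwise no more constructive, since its $m$ also depends on $A$ through $\eta$.
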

\begin{prop2} \label{inidcatrice}
   For \eqref{egtoutf} to hold it is enough that 
   $$\forall A\in{\mathcal B}(\R),\;\exists g:\R\to\R_+\mbox{ measurable such that }\mu(dx)\mbox{-a.e.},\quad \mathbf{1}_A(x)=\int_{y\in\R}g(y)\pi_{X=x}(dy).$$
\end{prop2}
\begin{proof}[Proof of Theorem \ref{conter}]
First, let us note that $D=\emptyset$.
Now let \( {A} \in \mathcal{B}([0,1]) \).  
Applying Lemma~\ref{mainlemma} to \( {A} \) gives us \( \widehat{A}_0\subset A \).  
We define \( A_1 = {A} \setminus \widehat{A}_0 \).  
Applying Lemma~\ref{mainlemma} to $A$ replaced by \( A_1 \) gives us \( \widehat{A}_1 \).  
We then define \( A_2 = A_1 \setminus \widehat{A}_1 \), and so on. We thus construct by induction a sequence \( (\widehat A_n)_{n \in \mathbb{N}} \) of disjoint Borel subsets of \( A \), and a sequence \( (g_{n})_{n \in \mathbb{N}} \) of non-negative measurable functions such that for all \( N \in \mathbb{N} \),
\[
\lambda\left(A \setminus \bigcup_{n=0}^{N} \widehat A_n\right) \leq \left(\frac{79}{80}\right)^N \lambda(A)\mbox{ and}
\]
\[
\mu(dx)\text{-a.e., }\int_{y \in \mathbb{R}} \sum_{n=0}^{N} g_{n}(y) \,\pi_{X=x}(dy) = \mathbf{1}_{\bigcup_{n=0}^{N} \widehat A_n}(x).
\]
Thus $\lambda(\bigcup_{n \in \mathbb{N}} \widehat A_n) =\lambda(A)$ and, since $\mu \ll \mathbf{1}_{[0,1]}\,\lambda$, $\mu(\bigcup_{n \in \mathbb{N}} \widehat A_n) =\mu(A)$. We finally get by the monotone convergence theorem that
\[
\mu(dx)\text{-a.e., } \int_{y \in \mathbb{R}} \sum_{n \in \mathbb{N}} g_{n}(y) \, \pi_{X = x}(dy) 
=\mathbf{1}_{\bigcup_{n \in \mathbb{N}} \widehat A_n}(x)  = \mathbf{1}_{A}(x).
\]
Finally, (\ref{egtoutf}) holds by Proposition \ref{inidcatrice}.
\end{proof}
For  $m \in \mathbb{N}^*$, we define  $\alpha_m = 2\times 4^{-m}$ and 
$I_m^k = [k \alpha_m, (k+1) \alpha_m)$ for \( k \in \mathbb{N} \).  The proof of Lemma \ref{mainlemma} relies on the following lemma.
\begin{lemma} \label{soussolution}
Let \( A \in \mathcal{B}([0,1]) \), $m\ge 3$ and 
$
k \in \{2,\dots, \frac{4^{m}}{2} - 3\}.
$
There exists a Borel set \( A_m^k \subset A \cap I_m^k \) and a non-negative measurable function \( g_{A_m^k} \) such that
\begin{itemize}\small
    \item[(i)] \(\mu(dx)\text{-a.e.,} \quad \int_{\mathbb{R}} g_{A_m^k}(y)\,\pi_{X=x}(dy) = \mathbf{1}_{A_m^k}(x),\)
    \item[(ii)] \(\{y \in \mathbb{R} : g_{A_m^k}(y)>0 \} \subset \left[2^{-m} - 4^{-(m+1)},\, 2^{-m} - 4^{-(2m+1)} \right],\)
    \item[(iii)] \(\lambda(A_m^k) \geq 2\lambda(A \cap I_m^k) - \alpha_m.\)
\end{itemize}
\end{lemma}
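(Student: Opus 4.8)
The plan is to build $A_m^k$ and $g_{A_m^k}$ explicitly, keeping $g_{A_m^k}$ supported in the single interval $J_m:=[2^{-m}-4^{-(m+1)},\,2^{-m}-4^{-(2m+1)}]$, on which $a_m$ is an increasing affine bijection onto $[0,1-4^{-m}]$ and $b_m=a_m+4^{-m}$. Reading off \eqref{eq:pi-conditional} (and using that the intervals $J_{m'}$, $m'\ge1$, are pairwise disjoint, so only the $m$-th summand of $\pi_{X=x}$ can contribute), one gets, for $g\ge 0$ measurable with $\{g>0\}\subset J_m$ and $\mu(dx)$-a.e.\ on $[0,1]$,
\[
\int_\R g(y)\,\pi_{X=x}(dy)=\frac{\kappa_m}{f_\mu(x)}\Big(g\big(a_m^{-1}(x)\big)\,\mathbf 1_{[0,1-4^{-m}]}(x)+g\big(a_m^{-1}(x-4^{-m})\big)\,\mathbf 1_{[4^{-m},1]}(x)\Big),
\]
where $\kappa_m>0$ is the explicit constant from \eqref{eq:pi-conditional} and $b_m^{-1}(x)=a_m^{-1}(x-4^{-m})$. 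Thus the shift by $4^{-m}$ governs the structure; since $\lambda(I_m^k)=\alpha_m=2\cdot4^{-m}$, I would split $I_m^k$ into its left and right halves $P=[k\alpha_m,\,k\alpha_m+4^{-m})$ and $Q=P+4^{-m}$. The assumption $2\le k\le 4^m/2-3$ gives $I_m^k\subset(4^{-m},1-4^{-m})$ (so both indicators above equal $1$ on $I_m^k$) and $a_m^{-1}(P)\subset J_m$.

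I then set $S:=(A\cap P)\cap\big((A\cap Q)-4^{-m}\big)=\{x\in P:x\in A\text{ and }x+4^{-m}\in A\}$ and $A_m^k:=S\cup(S+4^{-m})$, a Borel subset of $A\cap I_m^k$. The crucial input, read off \eqref{eq:fmu-density}, is that $f_\mu$ is constant, $\equiv\phi_{m,k}\in(0,\infty)$ say, on the interior of $I_m^k$: the discontinuity points of $f_\mu$ lie in $\{4^{-j}:j\ge1\}\cup\{1-4^{-j}:j\ge1\}$, i.e.\ at the integers $4^{m-j}$ and $4^m-4^{m-j}$ after rescaling by $4^{-m}$ when $j\le m$ (and outside $[2k,2k+2]$ entirely when $j>m$, using $k\ge2$); each such integer is either an endpoint of $[2k,2k+2]$, or even (when $1\le j\le m-1$), or equal to $1$ or $4^m-1$, which would force $k\in\{0,\,4^m/2-1\}$ — so none lies strictly between the consecutive even integers $2k$ and $2k+2$. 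I would then take $g_{A_m^k}:=\frac{\phi_{m,k}}{\kappa_m}\,\mathbf 1_{a_m^{-1}(S)}$, which is non-negative, measurable, and supported inside $J_m$; this is (ii).

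For (i), plug $g_{A_m^k}$ into the displayed formula and cancel $f_\mu(x)=\phi_{m,k}$ on $I_m^k$: for $x\in P$ the second term vanishes (as $x-4^{-m}\notin S$) and the first gives $\mathbf 1_S(x)$; for $x\in Q$ the first term vanishes (as $x\notin S$) and the second gives $\mathbf 1_S(x-4^{-m})=\mathbf 1_{S+4^{-m}}(x)$; for $x\in[0,1]\setminus I_m^k$ both terms vanish. Since $A_m^k\cap P=S$ and $A_m^k\cap Q=S+4^{-m}$, the right-hand side equals $\mathbf 1_{A_m^k}(x)$ for $\mu$-a.e.\ $x$, which is (i). For (iii), disjointness of $P$ and $Q$ gives $\lambda(A_m^k)=2\lambda(S)$, and since $A\cap P$ and $(A\cap Q)-4^{-m}$ are both subsets of the interval $P$ of length $4^{-m}$, inclusion–exclusion yields $\lambda(S)\ge\lambda(A\cap P)+\lambda(A\cap Q)-4^{-m}=\lambda(A\cap I_m^k)-4^{-m}$, hence $\lambda(A_m^k)\ge 2\lambda(A\cap I_m^k)-\alpha_m$.

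I expect the main obstacle to be the constancy of $f_\mu$ on $I_m^k$: deciding which of the countably many jumps of $f_\mu$ can fall inside a prescribed $I_m^k$ requires the parity/arithmetic bookkeeping above, and it is precisely to discard the two ``odd'' jumps (at $4^{-m}$ and at $1-4^{-m}$) and to keep $a_m^{-1}(P)$ inside $J_m$ that the hypothesis $2\le k\le 4^m/2-3$ is imposed. A minor secondary point is checking which of the two Dirac terms of $\pi_{X=x}$ is active for $x$ near $4^{-m}$ or $1-4^{-m}$, but since $I_m^k\subset(4^{-m},1-4^{-m})$ both are active there, and elsewhere the relevant value of $g_{A_m^k}$ is $0$, so this is harmless.
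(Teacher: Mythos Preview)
Your proof is correct and follows essentially the same approach as the paper: your set $A_m^k=S\cup(S+4^{-m})$ coincides with the paper's $R(A\cap I_m^k)\cap(A\cap I_m^k)$ (where $R$ is the involution swapping the two halves of $I_m^k$), your $g_{A_m^k}$ is the same function written as $\frac{1}{\pi_{X=a_m(y)}(\{y\})}\mathbf 1_{a_m^{-1}(S)}(y)$, and both the constancy argument for $f_\mu$ on $I_m^k$ and the inclusion--exclusion bound for (iii) match. The only cosmetic difference is that you compute $\int g_{A_m^k}\,d\pi_{X=x}$ directly for all $x\in[0,1]$, whereas the paper verifies it on $A_m^k$ pointwise and then uses a Fubini argument to handle $(A_m^k)^c$.
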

The Figure \ref{fig:a_n_b_n} below illustrates the idea of the proof of Lemma \ref{soussolution}. 
\begin{figure}[H]
    \centering
    \includegraphics[width=0.8\textwidth]{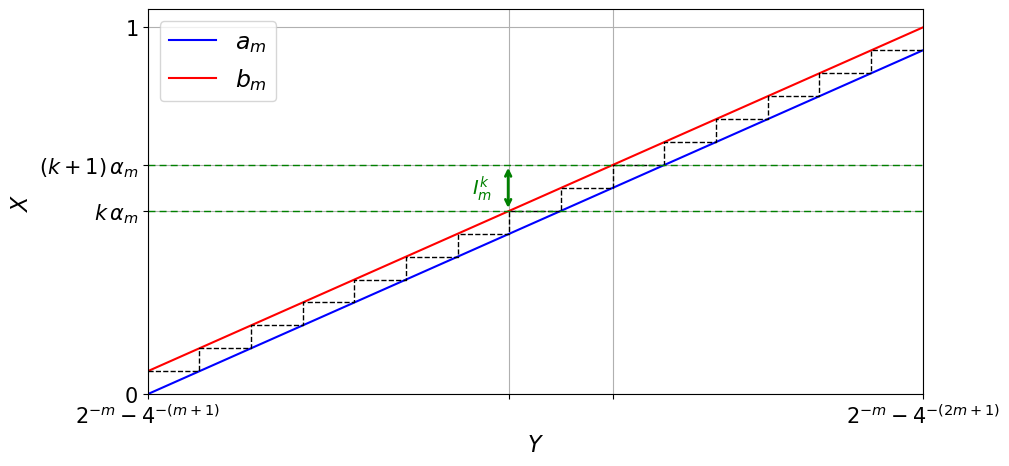} 
    \caption{Illustration of the proof of Lemma \ref{soussolution}}
    \label{fig:a_n_b_n}
\end{figure}
\begin{proof}[Proof]
Let \( A \in \mathcal{B}([0,1]) \), $m\ge 3$ and 
$
k \in \{2,\dots, \frac{4^{m}}{2} - 3\}.
$ We define the mesurable function $$I_m^k\ni x\mapsto R(x) =
\begin{cases}
x + \frac{\alpha_m}{2}, & \text{if } x \in [k\alpha_m, (k + \frac{1}{2}) \alpha_m), \\
x - \frac{\alpha_m}{2}, & \text{if } x \in [(k + \frac{1}{2}) \alpha_m, (k+1) \alpha_m),
\end{cases}
$$
and the Borel set
$$
A_m^k = R^{-1}\Bigl(A\cap I_m^k\Bigr) \cap \Bigl(A\cap I_m^k\Bigr)= R\Bigl(A\cap I_m^k\Bigr) \cap \Bigl(A\cap I_m^k\Bigr).
$$
(Note that the second equality holds because $R$ is an involution.) Since
$I_m^k \subset [2\alpha_m,1-2\alpha_m]$,
the functions \( a_m \) and \( b_m\) are affine functions with the same slope and such that \(b_m-a_m= \frac{\alpha_m}{2} \) on $a_{m}^{-1}\left([k\alpha_m, (k+\frac{1}{2})\alpha_m)\right)=b_{m}^{-1}([(k+\frac{1}{2}) \alpha_m, (k+1)\alpha_m))$. Thus
$$a_m^{-1} \left(A_m^k  \cap [k\alpha_m, (k+\tfrac{1}{2})\alpha_m)\right)
= b_m^{-1}\left(A_m^k \cap [(k+\tfrac{1}{2})\alpha_m, (k+1)\alpha_m)\right).$$
Since $2\alpha_m=4^{-(m-1)}\ge 4^{-\ell}$ for $\ell\ge m-1$ and $\frac{4^{-\ell}}{\alpha_m}=2\times 4^{m-1-\ell}$ is an integer for $1\le \ell\le m-2$, the density \(f_\mu\) given by (\ref{eq:fmu-density}) is constant on \(I_k^m\). With (\ref{eq:pi-conditional}), we deduce that
\[
\mathbf{1}_{\,a_m^{-1}\!\bigl(A_m^k \cap [\,k\alpha_m,\,(k+\tfrac12)\alpha_m)\bigr)}(y)\,\nu(dy)\text{-a.e., }
\pi_{X = a_m(y)}(\{y\})=
\pi_{X = b_m(y)}(\{y\}).
\]
We can thus define the measurable function
\[
g_{A_m^k}(y) = \frac{1}{\pi_{X = a_m(y)}(\{y\})} \, \mathbf{1}_{a_m^{-1}\left(A_m^k  \cap [k\alpha_m, (k+\tfrac{1}{2})\alpha_m)\right)}(y)
= \frac{1}{\pi_{X = b_m(y)}(\{y\})} \, \mathbf{1}_{b_m^{-1}\left(A_m^k \cap [(k+\tfrac{1}{2})\alpha_m, (k+1)\alpha_m)\right)}(y).
\]
Let $x \in A_m^k  \cap [k\alpha_m, (k+\frac{1}{2})\alpha_m).$
There exists a unique $y_x$ in $a_m^{-1}\left(A_m^k  \cap \left[k\alpha_m, (k+\frac{1}{2})\alpha_m\right)\right)$ such that $a_m(y_x) = x.$ Since \(\pi_{X = x}\) is a discrete measure with support in
$\bigcup_{\ell \in \mathbb{N}^*} (a_\ell^{-1}(\{x\}) \cup b_\ell^{-1}(\{x\}))$
it comes that
\[
\int_{y \in \mathbb{R}} g_{A_m^k}(y) \, \pi_{X = x}(dy)
=  g_{A_m^k }(y_x) \, \pi_{X = x}(\{y_x\})
=  g_{A_m^k }(y_x) \, \pi_{X = a_m(y_x)}(\{y_x\})
= 1.
\]
Let $x \in A_m^k  \cap [(k+\frac{1}{2})\alpha_m, (k+1)\alpha_m).$
There exists a unique $y_x$ in $b_m^{-1}\left(A_m^k  \cap [(k+\frac{1}{2})\alpha_m, (k+1)\alpha_m)\right)$ such that $b_m(y_x) = x.$ Since \(\pi_{X = x}\) is a discrete measure with support in \(\bigcup_{\ell \in \mathbb{N}^*} (a_\ell^{-1}(\{x\}) \cup b_\ell^{-1}(\{x\}))\), we then have 
\[
\int_{y \in \mathbb{R}} g_{A_m^k }(y) \, \pi_{X = x}(dy)
=  g_{A_m^k }(y_x) \, \pi_{X = x}(\{y_x\})
=  g_{A_m^k }(y_x) \, \pi_{X = b_m(y_x)}(\{y_x\})
= 1.
\]
Moreover, since for $y\in a_m^{-1}(A^k_m\cap [k\alpha_m, (k+\tfrac{1}{2})\alpha_m))$, we have $\{a_m(y),b_m(y)\}\subset A^k_m$, $g_{A^k_m}(y)\nu(dy)\text{-a.e.,}$ $\pi_{Y=y}(\{A^k_m\}^c)=0$ and
\begin{align*}
   \int_{x\in \{A^k_m\}^c}\int_{y\in\R} g_{A^k_m}(y)\,\pi_{X=x}(dy)\,\mu(dx)=\int_{y\in\R}g_{A^k_m}(y)\,\pi_{Y=y}(\{A^k_m\}^c)\,\nu(dy)=0.
\end{align*}
It follows that
\[
\mu(dx)\text{-a.e.,} \quad \int_{y \in \mathbb{R}} g_{A_m^k }(y) \, \pi_{X = x}(dy) = \mathbf{1}_{A_m^k }(x).
\]
We also have
\begin{align*}
\lambda(A_m^k)
&= \lambda\left((A\cap I_m^k) \cap R(A\cap I_m^k)\right) \\
&= \lambda(A\cap I_m^k) + \lambda \left(R(A\cap I_m^k)\right) - \lambda \left((A\cap I_m^k) \cup R(A\cap I_m^k)\right) \\
&\geq \lambda(A\cap I_m^k) + \lambda \left(R(A\cap I_m^k)\right) - \alpha_m \\
&= \lambda(A\cap I_m^k) + \lambda \left(R(A\cap [k\alpha_m, (k+\tfrac{1}{2})\alpha_m])\right)
+ \lambda \left(R(A\cap [(k+\tfrac{1}{2})\alpha_m, (k+1)\alpha_m])\right) - \alpha_m \\
&= \lambda(A\cap I_m^k) + 
\lambda \left(A\cap [(k+\tfrac{1}{2})\alpha_m, (k+1)\alpha_m]\right)
+ \lambda \left(A\cap [k\alpha_m, (k+\tfrac{1}{2})\alpha_m]\right) - \alpha_m \\
&= 2 \lambda(A\cap I_m^k) - \alpha_m.
\end{align*}
\end{proof}\begin{proof}[Proof of Lemma \ref{mainlemma}]
Let \( A  \in \mathcal{B}([0,1]) \). By the Lebesgue differentiation theorem (see \cite[page 456]{Jones2001}), we have
\[
\mathbf{1}_A(x)\, \lambda(dx) \text{-a.e., } \quad 
\frac{\lambda(A \cap [x-\varepsilon, x])}{\lambda([x-\varepsilon, x])}
\underset{\varepsilon \to 0^+}{\longrightarrow} 1,\quad
\frac{\lambda(A \cap [x, x+\varepsilon))}{\lambda([x, x+\varepsilon))}
\underset{\varepsilon \to 0^+}{\longrightarrow} 1.
\]
Let $\rho \in [\frac{63}{64}, 1)$. We thus have
\[
\mathbf{1}_A(x)\, \lambda(dx) \text{-a.e.,}\,\exists \varepsilon(x)>0,\, \forall \varepsilon' \in  (0, \varepsilon(x)],\quad
\min\left(
\frac{\lambda(A \cap [x-\varepsilon', x])}{\lambda([x-\varepsilon', x])},
\frac{\lambda(A \cap [x, x+\varepsilon'])}{\lambda([x, x+\varepsilon'])}
\right) > \rho.
\]
Since
$
\lambda(A \cap [2\eta', 1-2\eta']) \underset{\eta' \to 0^{+}}{\longrightarrow}  \lambda(A),
$
there exists $\eta_0 \in (0, \frac{1}{4})$ such that
\[
\forall \eta' \in (0, \eta_0],\quad \lambda\Bigl(A \cap [2\eta',1-2\eta']\Bigr) \ge \frac{1}{2}\lambda(A).
\]
Since
$
\lambda\Big(\{x \in A \cap [2\eta_0, 1-2\eta_0]: \varepsilon(x) \geq \eta'\}\Big) \underset{\eta' \to 0^{+}}{\longrightarrow}  \lambda(A \cap [2\eta_0, 1-2\eta_0]),
$
there exists \( \eta_1 > 0 \) such that
$$
\forall \eta' \in (0,\eta_1], \quad \lambda\Big(\{x \in A \cap [2\eta_0, 1-2\eta_0] : \varepsilon(x) \geq \eta'\}\Big) \geq \frac{\lambda(A \cap [2\eta_0, 1-2\eta_0])}{2}.
$$
Let $\eta=\min(\eta_0, \eta_1)$. We define
\begin{equation}
\tilde{A} = A \cap \bigcup_{\substack{x \in A \cap [2\eta,1-2\eta] \\ \varepsilon(x) \ge \eta}} (x-\eta, x+\eta).
\label{eq:tildeA_def}
\end{equation}
Since 
\[
\{x \in A \cap [2\eta,1-2\eta] : \varepsilon(x) \ge \eta\} \subset \tilde{A},
\]
it follows that
\begin{align}
\lambda(\tilde{A}) &\ge \lambda\Bigl(\{x \in A \cap [2\eta_0,1-2\eta_0] : \varepsilon(x) \ge \eta_1\}\Bigr) \nonumber\\[1mm]
&\ge \frac{1}{2}\lambda\Bigl(A \cap [2\eta_0,1-2\eta_0]\Bigr) \nonumber\\[1mm]
&\ge \frac{1}{4}\lambda(A) \label{Atild}.
\end{align}
According to the Vitali covering theorem (see \cite[page 448]{Jones2001}), there exists a subset $L$ of $\mathbb{N}$ and a sequence $(x_{\ell})_{{\ell} \in L}$ such that for all ${\ell} \in L$, 
$x_{\ell} \in A \cap [2\eta, 1-2\eta]$, $\varepsilon(x_{\ell}) \geq \eta,$
the intervals $(x_{\ell}-\eta, x_{\ell}+\eta)$ are pairwise disjoint, and
\begin{equation}
\bigcup_{\substack{x \in A \cap [2\eta, 1-2\eta] \\ \varepsilon(x) \geq \eta}} (x-\eta, x+\eta) 
\subset \bigcup_{{\ell} \in L} (x_{\ell}-5\eta, x_{\ell}+5\eta).
\label{vitalli}
\end{equation}
Observe that here
$
\operatorname{Card}(L) \le \left\lfloor \frac{1}{2\eta} \right\rfloor -1< +\infty.
$
Since $\frac{\eta}{2}\leq \frac{\eta_0}{2}<\frac 18=\alpha_2$, we can choose $m\ge 3$ such that $\frac{\eta}{2}\ge\alpha_m> \frac{\eta}{8}$. Note that if 
$
I_k^m\subset (x_\ell-\eta,\;x_\ell+\eta) \subset[\eta,\;1-\eta]\subset[2\alpha_m, 1-2\alpha_m],
$
then necessarily 
$
k \in \{2,\dots,\frac{4^{m}}{2}-3\}
$
and Lemma \ref{soussolution} applies. We define the following Borel set
\[
\widehat{A} = \bigcup_{\ell \in L, k \in \mathbb{N} \text{:} I_m^k \subset (x_{\ell}-\eta, x_{\ell}+\eta)} A_m^k,
\]
and the non-negative and mesurable function
\[
g_{\widehat{A}} = \sum_{\ell \in L, k \in \mathbb{N} \text{:} I_m^k \subset (x_{\ell}-\eta, x_{\ell}+\eta)} g_{A_m^k}.
\]
By the monotone convergence theorem, we then have
\begin{equation}
\mu(dx) \text{-a.e.,} \quad \int_{\mathbb{R}} g_{\widehat{A}}(y) \, \pi_{X=x}(dy)
= \sum_{\substack{\ell \in L, k \in \mathbb{N} \text{:} I_m^k \subset (x_{\ell}-\eta, x_{\ell}+\eta)}} \mathbf{1}_{A_m^k}(x)
= \mathbf{1}_{\bigcup_{\ell \in L, k \in \mathbb{N} \text{:} I_m^k \subset (x_{\ell}-\eta, x_{\ell}+\eta)} A_m^k}(x)
= \mathbf{1}_{\widehat{A}}(x).
\label{eq:ghatA_projection}
\end{equation}
This proves assertion (ii). Assertion (iii) holds by the definition of the functions \( g_{A_m^k} \). To complete the proof, let us now prove (i).
 Let $\ell \in L$ and $k \in \mathbb{N} \text{ be such that } I_m^k \subset (x_{\ell}-\eta, x_{\ell}+\eta)$. We have
\begin{align}
\lambda(A \cap I_m^k)
&= \lambda(A \cap [x_{\ell} - \eta,\, x_{\ell} + \eta])
 - \lambda\left(A \cap \left\{[x_{\ell} - \eta,\, x_{\ell} + \eta] \setminus I_m^k\right\}\right) \nonumber\\
&\geq \lambda(A \cap [x_{\ell}-\eta, x_{\ell}+\eta]) - \lambda([x_{\ell}-\eta, x_{\ell}+\eta] \setminus I_m^k) \nonumber\\[1mm]
&= \lambda(A \cap [x_{\ell}-\eta, x_{\ell}+\eta]) - (2\eta - \alpha_m) \nonumber\\[1mm]
&= 2 \eta \times \frac{\lambda(A \cap [x_{\ell}-\eta, x_{\ell}+\eta])}{\lambda([x_{\ell}-\eta, x_{\ell}+\eta])} - (2\eta - \alpha_m) \nonumber\\[1mm]
&= 2 \eta \times \frac{1}{2} \left( \frac{\lambda(A \cap [x_{\ell}-\eta, x_{\ell}])}{\lambda([x_{\ell}-\eta, x_{\ell}])} + \frac{\lambda(A \cap [x_{\ell}, x_{\ell}+\eta])}{\lambda([x_{\ell}, x_{\ell}+\eta])} \right) - (2\eta - \alpha_m) \nonumber\\[1mm]
&\geq 2 \eta \rho - (2\eta - \alpha_m), \quad \text{since } x_{\ell} \in \{ x \in A \cap [\eta, 1-\eta] : \varepsilon(x) \geq \eta \} \nonumber\\[1mm]
&= -2\eta(1-\rho)+ \alpha_m \nonumber\\[1mm]
&\geq \alpha_m (16\rho - 15), \quad \text{since }  8 \alpha_m \geq \eta.\label{lower_bound_lambda}
\end{align}
With Lemma \ref{soussolution} (iii) and using that $\rho \geq \frac{63}{64}$, we deduce that
\begin{align}
\lambda(A_m^k) \geq 2 \lambda(A \cap I_m^k) - \alpha_m \geq \alpha_m (32\rho - 31) \geq \frac{\alpha_m}{2}. \label{I_nk}
\end{align}
For $\ell \in L$, using \eqref{I_nk} for the first inequality, then that $(x_{\ell}-\eta, x_{\ell}+\eta)$ contains at least $\left\lfloor \frac{2\eta}{\alpha_m} \right\rfloor -1$ intervals $I^k_m$ for the second inequality, and  that $\alpha_m \leq \eta/2$ for the last one, we obtain
\begin{align}
\lambda \left( \bigcup_{k \in \mathbb{N} \text{:} I_m^k \subset (x_{\ell}-\eta, x_{\ell}+\eta)} A_m^k \right)
&= \sum_{k \in \mathbb{N} \text{:} I_m^k \subset (x_{\ell}-\eta, x_{\ell}+\eta)} \lambda(A_m^k) \nonumber \\
&\geq \sum_{k \in \mathbb{N} \text{:} I_m^k \subset (x_{\ell}-\eta, x_{\ell}+\eta)} \frac{\alpha_m}{2} 
\quad \nonumber \\
&\geq \left( \left\lfloor \frac{2\eta}{\alpha_m} \right\rfloor -1 \right) \frac{\alpha_m}{2} \nonumber \\
&\ge  \eta-\alpha_m  \nonumber\\
&= \frac{1}{2} \left( 1 - \frac{\alpha_m}{\eta} \right) \lambda\left((x_{\ell}-\eta, x_{\ell}+\eta)\right) \nonumber \\
&\geq \frac{1}{4} \lambda \left((x_{\ell}-\eta, x_{\ell}+\eta)\right). \label{UnionA_k}
\end{align}
Thus
\begin{align*}
\lambda(\widehat{A}) &=\sum_{\ell \in L} \lambda \left( \bigcup_{k \in \mathbb{N} \text{:} I_m^k \subset (x_{\ell}-\eta, x_{\ell}+\eta)} A_m^k \right) \\[1mm]
&\geq \sum_{\ell \in L} \frac{1}{4} \lambda\left((x_{\ell}-\eta, x_{\ell}+\eta)\right), \quad \text{by (\ref{UnionA_k}),} \\[1mm]
&\geq \sum_{\ell \in L} \frac{1}{20} \lambda\left((x_{\ell}-5\eta, x_{\ell}+5\eta)\right) \\[1mm]
&\geq \frac{1}{20} \lambda \left( \bigcup_{\ell \in L} (x_{\ell}-5\eta, x_{\ell}+5\eta) \right) \\[1mm]
&\geq \frac{1}{20} \lambda \left( \bigcup_{\substack{x \in A \cap [2\eta,1-2\eta]\\\varepsilon(x) \geq \eta}} (x_{\ell} - \eta, x_{\ell} + \eta) \right), \quad \text{ by (\ref{vitalli})}, \\[1mm]
&\geq \frac{1}{20} \lambda(\tilde{A}), \quad \text{ by (\ref{eq:tildeA_def}),}\\[1mm]
&\geq \frac{1}{80} \lambda(A), \quad \text{ by (\ref{Atild}),}
\end{align*}
which shows (ii). 
\end{proof}

\begin{proof}[Proof of Proposition \ref{inidcatrice}]
In the proof, a function \( f : \mathbb{R} \to \mathbb{R} \) will be called a simple function if it can be expressed as
\[
f(x) = \sum_{i=1}^n a_i \mathbf{1}_{A_i}(x),
\]
where $n \in \mathbb{N}$, \( (a_i)_{i \in \{1, \ldots, n\}} \in \mathbb{R}^n \), and \( (A_i)_{i \in \{1, \ldots, n\}} \) is a family of pairwise disjoint Borel sets.
Let $(X,Y)$ be a random couple distributed according to $\pi$.
Suppose that for every \( A \in \mathcal{B}(\mathbb{R})\), there exists a non-negative measurable function \( g_A \) such that \( \mathbb{E}[g_A(Y) | X] = \mathbf{1}_A(X) \). By linearity of the conditional expectation, for every simple function \( f \in L^1_+(\mu) \), there exists \( g \in L^1_+(\nu) \) such that \( \mathbb{E}[g(Y) | X] = f(X) \). Let \( f \in L^1_+(\mu) \). There exists a non-decreasing sequence \( (f_n)_{n \in \mathbb{N}} \) of non-negative simple functions in \( L^1_+(\mu) \) such that \( f_n \xrightarrow{L^1(\mu)} f \). Let us construct by induction an non-decreasing sequence \( (g_{n})_{n \in \mathbb{N}} \) of functions in \( L^1_+(\nu) \) such that
\begin{equation}
\forall n \in \mathbb{N}, \quad \mathbb{E}[g_{n}(Y) | X] = f_n(X)\text{ a.s..}
\label{eq:conditional_expectation}
\end{equation}
Let \( g_{0} \in L^1_+(\nu) \) such that \( \mathbb{E}[g_{0}(Y) | X] = f_0(X) \). Since \( f_{n+1} - f_n \) is a non-negative simple function in \( L^1_+(\mu) \), there exists \( g_{f_{n+1} - f_n} \in L^1_+(\nu) \) such that a.s., 
\[
\mathbb{E}[g_{f_{n+1} - f_n}(Y) | X] = (f_{n+1} - f_n)(X)\text{}.
\]
Set \( g_{n+1} = g_{f_{n+1} - f_n} + g_{n} \).
By linearity of the conditional expectation, we have a.s.,
\[
\mathbb{E}[g_{n+1}(Y) | X] = (f_{n+1} - f_n)(X) + f_n(X) = f_{n+1}(X).
\]
Moreover, \(\nu(dy)\)-a.e., \(g_{n+1} \geq g_{n}\), because \(g_{f_{n+1} - f_n} \geq 0\).
Observe that for all $p \geq q$,
\begin{align*}
\|g_{p} - g_{q}\|_{L^1(\nu)} 
&= \mathbb{E}[|g_{p}(Y) - g_{q}(Y)|] \\
&= \mathbb{E}[g_{p}(Y) - g_{q}(Y)] \\
&= \mathbb{E}\left[\mathbb{E}[g_{p}(Y) | X] - \mathbb{E}[g_{q}(Y) | X]\right] \\
&= \mathbb{E}[f_p(X) - f_q(X)] \\
&= \mathbb{E}[|f_p(X) - f_q(X)|] \\
&= \|f_p - f_q\|_{L^1(\mu)}.
\end{align*}
Thus, \((g_{n})_{n \in \mathbb{N}}\) is a Cauchy sequence in the closed subset \(L^1_+(\nu)\) of \(L^1(\nu)\). Since \(L^1(\nu)\) is complete, there exists a function \(g \in L^1_+(\nu)\) such that
$
g_{n} \xrightarrow{L^1(\nu)} g.
$
Since \( f_n \xrightarrow{L^1(\mu)} f \), it comes from \eqref{eq:conditional_expectation} and the continuity of the operator $T$ defined by \eqref{eq:def_T_operator} that 
$
\mathbb{E}[g(Y) | X] = f(X)\text{ a.s..}
$
\end{proof}
\section{Surjectivity without sign constraint}
In this section, we relax the non-negativity constraint in order to understand which conditions obtained in the previous section stem from non-negativity, and which ones arise from surjectivity. We are thus interested in probability measures \(\pi\) such that
\begin{equation*}
   \forall f \in L^1(\mu),\; \exists g \in L^1(\nu) \text{ measurable such that } \mu(dx) \text{-a.e.},\quad 
   f(x) = \int_{y \in \mathcal{Y}} g(y) \pi_{X=x}(dy). \tag{$\mathcal{R}$} \label{egtoutf2}
\end{equation*}
We recall that this amounts to studying the surjectivity of the operator
\[
T : L^1(\nu) \to L^1(\mu),
\]
defined as
\[
Tg(x) = \int_{y \in \mathcal{Y}} g(y) \, \pi_{X=x}(dy).
\]
We easily verify that \(T\) is linear and bounded, since
\[
\forall g \in L^1(\nu), \quad \|Tg\|_{L^1(\mu)} \leq \|g\|_{L^1(\nu)}.
\]
The study of surjectivity, without non-negativity constraints, between two Banach spaces can be carried out using existing results from functional analysis, related to duality. This functional analysis approach also allows us to better understand the necessary condition in Proposition \ref{propcn}, which was used to prove Theorem \ref{mainsuffisant}. In the rest of the section, \( \mathring{B}_{L^1(\nu)} \) and \( \mathring{B}_{L^1(\mu)} \) denote respectively the unit open balls of \( L^1(\nu) \) and \( L^1(\mu) \), and $\overline{B}_{L^1(\mu)}$, $\overline{B}_{L^1(\mu)}$ are respectively the unit closed balls of $L^1(\nu)$ and $L^1(\mu)$.
\begin{rk}
Proposition \ref{propcn} implies in particular that if \eqref{egtoutf} holds, then 
$$\forall A \in \mathcal{B}(\mathbb{R})\text{ such that } \mu(A) > 0,\quad \nu\bigl(\{y \in \mathcal{Y} : \pi_{Y=y}(A) = 1\}\bigr) > 0.$$ 
\end{rk}
The next two propositions show that the necessary condition in Proposition \ref{propcn} and (\ref{egtoutf}) are in fact very close: the necessary condition implies that $T(\mathring{B}_{L^1(\nu)}) =\mathring{B}_{L^1(\mu)}$, while $\eqref{egtoutf}$ is equivalent to $T(\overline{B}_{L^1(\nu)}) = \overline{B}_{L^1(\mu)}$. Moreover, under this necessary condition, \eqref{egtoutf2} holds by (iii) of Proposition \ref{corball}. (Indeed, let \( f \in L^1(\mu) \). There exists \( g \in \mathring{B}_{L^1(\nu)} \) such that
$
Tg = \frac{f}{\|f\|_{L^1(\mu)} + 1}.
$
Therefore,
$T\left( (\|f\|_{L^1(\mu)} + 1) g \right) = f.$)

\begin{prop2} \label{corball}
The assertion 
\begin{itemize}
\item[(i)] $\forall A \in \mathcal{B}(\mathbb{R}),\text{ such that } \mu(A) > 0,\quad \nu\bigl(\{y \in \mathcal{Y}: \pi_{Y=y}(A) = 1\}\bigr) > 0$
\end{itemize}
implies the following equivalent assertions:
\begin{itemize}
    \item[(ii)]  $\forall A \in \mathcal{B}(\mathbb{R})$ such that $\mu(A) > 0$, \quad 
    $
    \operatorname{ess\,sup}_{\nu(dy)} \pi_{Y=y}(A) = 1,
    $  
      
    \item[(iii)] $T(\mathring{B}_{L^1(\nu)}) =\mathring{B}_{L^1(\mu)}$.
\end{itemize}
\end{prop2}
The proof is postponed to the end of this section.
\begin{prop2}
The following assertions are equivalent to \eqref{egtoutf}:
\begin{itemize}
\item[(i)] $\forall f \in L^1(\mu),\; \exists g \in L^1(\nu) \text{ such that } Tg = f \text{ and } \|g\|_{L^1(\nu)} = \|f\|_{L^1(\mu)},$
\item[(ii)] $T(\overline{B}_{L^1(\nu)}) = \overline{B}_{L^1(\mu)}$.
\end{itemize}
\end{prop2}
\begin{proof}
Let us first show that \eqref{egtoutf} $\implies$ (i). Suppose that (\ref{egtoutf}) holds.
Let $f^+$ and $f^-$ denote the positive and negative parts of $f$ so that $f=f^{+}-f^{-}$. Let $g_1, g_2\in L^1_+(\nu)$ be such that $Tg_1=f^{+}$ and $Tg_2=f^{-}$. For $g=g_1-g_2$, we have $Tg=f$. Using that $T$ is isometric on $L^{1}_+(\nu)$ and non-expansive on $L^1(\nu)$, we get
\begin{align*}
\|f\|_{L^1(\mu)}=\|f^+\|_{L^1(\mu)}+\|f^-\|_{L^1(\mu)}=\|g_1\|_{L^1(\nu)}+\|g_2\|_{L^1(\nu)}\ge \|g\|_{L^1(\nu)}\ge\|Tg\|_{L^1(\mu)}=\|f\|_{L^1(\mu)}
\end{align*}
so that
$
\|g\|_{L^{1}(\nu)}=\|f\|_{L^{1}(\mu)}
$.

Since $T$ is non-expansive, $T(\overline{B}_{L^1(\nu)})\subset \overline{B}_{L^1(\mu)}$. Under (i), the reverse inclusion also holds, so (i) implies (ii).

Let us finally show that (ii) $\implies$ \eqref{egtoutf}. Let $f \in L^1_+(\mu)$ be non-zero. Up to replacing $f$ by $\frac{f}{\|f\|_{L^1(\mu)}}$, we may suppose that $\|f\|_{L^1(\mu)} = 1$. There exists $g \in \overline{B}_{L^1(\nu)}$ such that $Tg = f$. Since $T$ is non-expansive, we have $\|g\|_{L^1(\nu)} \geq \|Tg\|_{L^1(\mu)} = 1$, and since $g \in \overline{B}_{L^1(\nu)}$, it follows that $\|g\|_{L^1(\nu)} = 1= \|f\|_{L^1(\mu)}$. Moreover, by integrating the equality $Tg(x)=f(x)$ against $\mu(dx)$, we get
\[
\int_{x \in \mathbb{R}} f(x)\, \mu(dx) = \int_{y \in \mathcal{Y}} g(y)\, \nu(dy).
\]
It follows that
\[
\int_{y \in \mathcal{Y}} (\left| g(y)\right|  - g(y)) \, \nu(dy) 
= \int_{x \in \mathbb{R}} (\left| f(x)\right| - f(x)) \, \mu(dx) = 0.
\]
Thus, \,\(\nu(dy)\)-a.e., \( g(y) \geq 0 \), and it follows that $
g \in L^1_+(\nu).
$
\end{proof}
\begin{rk}
Since $T$ is non-expansive, $T(\overline{B}_{L^1(\nu)})\subset \overline{B}_{L^1(\mu)}$ with equality when  $T(\mathring{B}_{L^1(\nu)}) =\mathring{B}_{L^1(\mu)}$ and $T(\overline{B}_{L^1(\nu)})$ is closed. In view of Proposition \ref{corball}, we deduce that the necessary condition 
\[
\forall A \in \mathcal{B}(\mathbb{R})\mbox{ such that } \mu(A) > 0,\; \nu\bigl(\{y \in \mathcal{Y} : \pi_{Y=y}(A) = 1\}\bigr) > 0
\]is equivalent to (\ref{egtoutf}) if and only if $T(\overline{B}_{L^1(\nu)})$ is closed. At this stage of our research, we do not know whether the two statements are equivalent in general.
\end{rk}
\begin{rk}
The assertion (i) of Proposition \ref{corball} is not equivalent to (ii) or (iii). Indeed, to prove Proposition \ref{cor2} below, we construct a law $\pi$ such that (i) does not hold by Theorem \ref{mainsuffisant}, since $\nu(dy)$-a.e., the cumulative distribution function of $\pi_{Y=y}$ is increasing on an interval of positive length, and (ii) holds, since $\delta$ can be chosen arbitrarily close to $1$. In particular, the assertions $T(\mathring{B}_{L^1(\nu)}) =\mathring{B}_{L^1(\mu)}$ and $T(\overline{B}_{L^1(\nu)}) = \overline{B}_{L^1(\mu)}$ are not equivalent.
\end{rk}
\begin{example}[A simple example where \eqref{egtoutf} does not hold and $T$ is surjective] \label{remark24}
We present here an easy example, much simpler than the one constructed in Theorem \ref{conter}, where \(\nu(D) = 0\)$,\,\nu$ and $\mu$ are absolutely continuous with respect to the Lebesgue measure, and $T$ is surjective but (\ref{egtoutf})
does not hold. Let \(p \in (0, 1)\) and \(I \sim \text{Bernoulli}(p)\) be independent of two independent random variables \(X\) and \(X'\) with  common law \(\mu\). We define
\[
Y = I X + (1-I) X'.
\]
Note that \(Y\) has the same distribution as \(X\), i.e., $\mu=\nu$. The joint distribution of $(X,Y,I)$ thus reads
\[
\gamma(dx,dy,di)=p \, \delta_1(di) \, \mu(dx) \, \delta_x(dy) +(1-p) \, \delta_0(di) \, \mu(dx) \, \mu(dy).
\]
By integrating over \(i\), we get that the joint distribution of $(X,Y)$ is:  
\[
\pi(dx,dy) = \mu(dx) \left(p \, \delta_x(dy) + (1-p) \, \mu(dy)\right)
= \mu(dy) \big(p\, \delta_y(dx) + (1-p)\, \mu(dx)\big).
\]
Let \(f \in L^1(\mu)\). Observe that we have \(\mu(dx)\)-a.e.,
\[
\pi_{X=x}(dy) = p\,\delta_{x}(dy) + (1-p)\, \mu(dy).  
\]
Let \(g = \frac{f - \mathbb{E}[f(X)]}{p} + \mathbb{E}[f(X)]\). Note that $g \in L^1(\nu)$ and $\mathbb{E}[f(X)] = \mathbb{E}[g(Y)]$. Moreover, we have \(\mu(dx)\)-a.e.,
\begin{align*}
\int_{y \in \mathbb{R}} g(y)\, \pi_{X=x}(dy) &= \int_{y \in \mathbb{R}} g(y) \left( p\, \delta_{x}(dy) + (1-p)\, \mu(dy) \right) \\
&= p \int_{y \in \mathbb{R}} g(y) \, \delta_{x}(dy) + (1-p) \int_{y \in \mathbb{R}} g(y) \, \mu(dy) \\
&= p \,g(x) + (1-p)\, \mathbb{E}[g(X)]\\
&= p \,g(x) + (1-p)\, \mathbb{E}[f(X)]\\
&= f(x).
\end{align*}
Therefore, $\eqref{egtoutf2}$ holds. When \(\mu\) is not a Dirac mass, there exists \(A \in \mathcal{B}(\mathbb{R})\) such that \(0 < \mu(A) < 1\). Let us consider \(f = \mathbf{1}_A\), which belongs to \(L^1_+(\mu)\). Then
$
\mathbb{E}[f(Y)] = \mu(A),$ and $\{y \in \mathbb{R} : g(y) < 0\} = \{y \in \mathbb{R} : f(y) < (1-p)\,\mathbb{E}[f(Y)]\} = A^c.
$
Since \(\nu(A^c) = \mu(A^c) > 0\), we have \(\nu(\{y \in \mathbb{R} : g(y) < 0\}) > 0\). Thus, \(g \notin L^1_+(\nu)\). It is therefore natural to ask whether \eqref{egtoutf} holds when \(\mu\) is not a Dirac mass.
In fact,
since \(\nu(dy)\)-a.e.,
\(
\pi_{Y=y}(dx) = p\, \delta_y(dx) + (1-p)\, \mu(dx),
\)
we have $\mu(D)=0$ and \(\nu(dy)\)\text{-a.e., } \(\mu \ll \pi_{Y=y}\). Either $\mu$ is discrete and (\ref{egtoutf})
does not hold by Corollary \ref{defC} or $\mu$ has a non-zero diffuse component and (\ref{egtoutf})
does not hold by Corollary \ref{Lebesguedensity}.
\end{example}
Let \(U\) and \(V\) be two Banach spaces and \(L\) be a bounded linear operator from \(U\) to \(V\). Let \(U^*\) and \(V^*\) denote the topological duals of \(U\) and \(V\). The notation \(\langle \cdot, \cdot \rangle_U\) (resp. \(\langle \cdot, \cdot \rangle_V\)) denotes the duality pairing, corresponding to the bilinear application of \(U^* \times U\) (resp. \(V^* \times V\)) that associates a scalar to a pair formed by a linear form of \(U^*\) (resp., \(V^*\)) and a vector of \(U\) (resp., \(V\)). To characterize the surjectivity of the operator $L$, it is useful to introduce its adjoint $L^*$.
\begin{def2}\label{definition6}
The adjoint operator \(L^*\) of \(L\) is the unique linear bounded operator from \(V^*\) to \(U^*\) satisfying the equality
\[
\forall u \in U,\, \forall v^* \in V^*, \quad \langle v^*, Lu \rangle_V = \langle L^* v^*, u \rangle_U.
\]
\end{def2}
\begin{thm2}\cite[Theorem 4.13, page 100]{Rudin1991}\label{corollary1}
The operator \(L\) is surjective if and only if there exists \(\delta > 0\) such that
\[
\forall v^* \in V^*, \quad \|L^*v^*\|_{U^*} \geq \delta \|v^*\|_{V^*}.
\]
\end{thm2}
Let us compute the adjoint $T^*$ of $T$.
\begin{prop2} \label{proposition14}
The operator \(T^*\) defined as
\[ T^* : L^\infty(\mu) \to L^\infty(\nu) \]
\[ T^*f(y) = \int_{x \in \mathbb{R}} f(x) \, \pi_{Y=y} (dx)= \mathbb{E}[f(X)|Y=y] \]
is the adjoint of \(T\).
\end{prop2}
\begin{proof}
Here \(U = L^1(\nu)\) and \(V = L^1(\mu)\), \(U^* = L^\infty(\nu)\) and \(V^* = L^\infty(\mu)\).
The linearity is clear. Let us first show that the operator \( T^* \) is well defined and bounded from \( L^{\infty}(\mu) \) into \( L^{\infty}(\nu) \).
Indeed, let \( f^* \in L^\infty(\mu) \). We have \( \mu(dx) \)-a.e.,
$
|f^*(x)| \leq \|f^*\|_{L^\infty(\mu)},
$
and \( \nu(dy) \)-a.e., \( \operatorname{supp}(\pi_{Y=y}) \subset \operatorname{supp}(\mu) \), so that
\begin{align*}
\|T^*f^*\|_{L^\infty(\nu)} 
&= \int_{\operatorname{supp}(\pi_{Y=y})} f^*(x) \, \pi_{Y=y}(dx) \\
&\leq \int_{\operatorname{supp}(\mu)} \|f^*\|_{L^\infty(\mu)} \, \pi_{Y=y}(dx) \\
&= \|f^*\|_{L^\infty(\mu)} < +\infty.
\end{align*}
Let us check that \( T^* \) is the adjoint of \( T \). For all \(f^* \in L^\infty(\mu)\) and \(g \in L^1(\mu)\),
\begin{align*}
\int_{x \in \mathbb{R}} f^*(x)\, T g(x) \, \mu(dx) &= \int_{x \in \mathbb{R}} f^*(x) \left(\int_{y \in \mathcal{Y}} g(y) \, \pi_{X=x} (dy)\right) \, \mu(dx) \\
    &= \int_{y \in \mathcal{Y}} g(y) \left(\int_{x \in \mathbb{R}} f^*(x) \, \pi_{Y=y}(dx)\right) \, \nu(dy) \\
    &= \int_{y \in \mathcal{Y}} g(y)\, T^*f^*(y) \, \nu(dy).
\end{align*}
\end{proof}
The following theorem provides a criterion to verify the surjectivity of $T$ in a non-constructive way.
\begin{thm2}\label{corollary2}
The following statements are equivalent:
\begin{itemize}
    \item[(i)] \(T\) is surjective.
    \item[(ii)] There exists \(\delta > 0\) such that for every function \(f^* \in L^\infty(\mu)\),
    \[
    \|T^*f^*\|_{L^\infty(\nu)} \geq \delta\, \|f^*\|_{L^\infty(\mu)}.
    \]
    \item[(iii)] There exists \(\delta > 0\) such that for every simple function \(f^*\),
    \[
    \|T^*f^*\|_{L^\infty(\nu)} \geq \delta\, \|f^*\|_{L^\infty(\mu)}.
    \]
\end{itemize}
\end{thm2}
The proof of Theorem \ref{corollary2} relies on Theorem \ref{corollary1} and the following result.
\begin{lemma}\cite[page 35]{Stein2011}\label{lemma_simple_functions_dense}
Let  $\rho$ be a non-negative finite measure on $(\mathbb{R}, \mathcal{B}(\mathbb{R}))$. The simple functions are dense in \(L^\infty(\rho)\).
\end{lemma}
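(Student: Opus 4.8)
The plan is to give the classical ``horizontal slicing'' argument: an essentially bounded measurable function is a uniform (hence $L^\infty(\rho)$) limit of simple functions obtained by partitioning its range into short intervals. Fix $f^* \in L^\infty(\rho)$ and set $M = \|f^*\|_{L^\infty(\rho)}$. The first step is to replace $f^*$ by a genuinely bounded representative: the set $\{x \in \mathbb{R} : |f^*(x)| > M\}$ is Borel and $\rho$-negligible, so redefining $f^*$ to be $0$ there does not change its class in $L^\infty(\rho)$ and produces a Borel function, still written $f^*$, with $|f^*(x)| \le M$ for every $x \in \mathbb{R}$.

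Next, fix $\varepsilon > 0$, choose an integer $N$ with $2M/N < \varepsilon$, and set $t_k = -M + 2kM/N$ for $k = 0, 1, \dots, N$. Put $A_k = (f^*)^{-1}\big([t_{k-1}, t_k)\big)$ for $1 \le k \le N-1$ and $A_N = (f^*)^{-1}\big([t_{N-1}, t_N]\big)$. By measurability of $f^*$ these are Borel sets; they are pairwise disjoint; and because $|f^*| \le M$ everywhere they cover $\mathbb{R}$. Define the simple function $s = \sum_{k=1}^N t_{k-1}\, \mathbf{1}_{A_k}$, which has the form required in the paper's definition of simple functions (finitely many real values on pairwise disjoint Borel sets). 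For every $x \in \mathbb{R}$ there is a unique $k$ with $x \in A_k$, and then $|f^*(x) - s(x)| \le t_k - t_{k-1} = 2M/N < \varepsilon$. Taking the supremum over $x$ gives $\|f^* - s\|_{L^\infty(\rho)} \le \varepsilon$, so simple functions are dense in $L^\infty(\rho)$.

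I do not expect any real obstacle here; the argument is elementary and standard. The only point that warrants a line of care is the move from an essential-supremum bound to a genuine pointwise bound, which is why the first step discards the $\rho$-null set on which the chosen representative might exceed $M$. Note that neither the finiteness of $\rho$ nor the choice of $\mathbb{R}$ as the underlying space is used, so the statement in fact holds over an arbitrary measure space; we retain the hypotheses as stated since that is the only generality invoked later.
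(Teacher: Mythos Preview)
Your proof is correct; it is the standard horizontal-slicing construction and every step is justified. The paper itself does not give a proof of this lemma but simply cites \cite[page 35]{Stein2011}, so there is nothing to compare against beyond noting that your argument is exactly the one found in such references.
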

\begin{proof}[Proof of Theorem \ref{corollary2}]
By Theorem \ref{corollary1}, (i) \(\Longleftrightarrow \) (ii). Moreover, (ii) \(\implies\) (iii) is obvious. 
Let us show (iii) \(\implies\) (ii). Let \( f^* \in L^\infty(\mu) \). By density of the simple functions in \( L^\infty(\mu) \) (see Lemma \ref{lemma_simple_functions_dense}), there exists a sequence of simple functions \( (f^*_n)_{n \in \mathbb{N}} \) in \( L^\infty(\mu) \) such that \( f^*_n \xrightarrow{L^\infty(\mu)} f^* \). Since \( T^* \) is a bounded linear operator, it is continuous. By continuity, \( T^*f^*_n \xrightarrow{L^\infty(\nu)} T^*f^* \). Taking the limit in the inequality \( \|T^*f^*_n\|_{L^\infty(\nu)} \geq \delta \|f^*_n\|_{L^\infty(\mu)} \), we have $
\|T^*f^*\|_{L^\infty(\nu)} \geq \delta \|f^*\|_{L^\infty(\mu)}.
$
\end{proof}
\begin{rk}
According to Theorem~\ref{corollary2},  
if \(T\) is surjective, then there exists \(\delta>0\) such that for every
\(A\in\mathcal{B}(\mathbb{R})\) with \(\mu(A)>0\),\,
$
\operatorname*{ess\,sup}_{\nu(dy)} \pi_{Y=y}(A) \;>\; \delta .
$
Hence,
\[
\exists\,\delta'>0,\, \forall\,A\in\mathcal{B}(\mathbb{R})\text{ such that } \mu(A)>0, \quad \nu\bigl(\{\,y\in\mathcal{Y} : \pi_{Y=y}(A)>\delta'\}\bigr)\;>\;0 .
\]
This is  a weakened form of the necessary condition for \eqref{egtoutf} to hold:
\[
\forall\,A\in\mathcal{B}(\mathbb{R})\text{ such that } \mu(A)>0,\quad
\nu\bigl(\{y \in \mathcal{Y} :  \pi_{Y=y}(A)=1\}\bigr) \;>\; 0 .
\]
\end{rk}

We deduce from Theorem \ref{corollary2} the following surjectivity criterion on \( \pi \).
\begin{prop2} \label{kkk}
If 
\(
\xi :=\inf_{\substack{A \in \mathcal{B}(\mathbb{R}) : \mu(A) > 0}}\, \operatorname{ess\,sup}_{\nu(dy)} \pi_{Y=y}(A) > \frac{1}{2},
\)
then (iii) in Theorem \ref{corollary2} holds with $\delta=2\xi-1$ and  \eqref{egtoutf2} holds.
\end{prop2}
\begin{rk}
The sufficient condition given in Proposition \ref{kkk} is not necessary. Indeed, in Example \ref{remark24}, for \(A \in \mathcal{B}(\mathbb{R})\) such that \(\mu(A) > 0\),
\[
\operatorname{ess\,sup}_{\nu(dy)} \pi_{Y=y}(A) = p + (1-p) \,\mu(A).
\]
Suppose in this example that \(\mu\) is a measure that cannot be written in the form \(\frac{1}{2} \delta_{x_1} + \frac{1}{2} \delta_{x_2}\), where \(x_1\) and \(x_2\) are real numbers, possibly equal. Then, there exists a Borel set \(A_0 \in \mathcal{B}(\mathbb{R})\) such that \(0 < \mu(A_0) < \frac{1}{2}\).

In the case when \(p < \frac{\frac{1}{2} - \mu(A_0)}{1 - \mu(A_0)}\),
\[
\operatorname{ess\,sup}_{\nu(dy)} \pi_{Y=y}(A_0) = p + (1-p)\, \mu(A_0) = \mu(A_0) + p\,(1 - \mu(A_0)).
\]
\[
< \mu(A_0) + \frac{1}{2} - \mu(A_0) < \frac{1}{2}.
\]
Thus,
\[
\inf_{\substack{A \in \mathcal{B}(\mathbb{R}) : \mu(A) > 0}}\, \operatorname{ess\,sup}_{\nu(dy)} \pi_{Y=y}(A) < \frac{1}{2}.
\]
\end{rk}\begin{proof}
Let \( f^* \) be a simple function in \( L^\infty(\mu) \).  
There exists a finite index set \( I \), a collection of disjoint Borel sets \( (A_i)_{i \in I} \) with \( \mu(A_i) > 0 \),  
and a vector \( (f^*_i)_{i \in I}\in\R^I\) such that
\[
\forall x \in \mathbb{R},\text{ }f^*(x) = \sum_{i \in I} f^*_i\, \mathbf{1}_{A_i}(x).
\]
Let \( i_0 \in I \) such that \(\max_{i \in I} \lvert f_i^* \rvert = \lvert f^*_{i_0} \rvert = \|f^*\|_{L^\infty(\mu)}\).
We have \(\nu(dy)\)-a.e.,
\begin{align*}
\left| T^*f^*(y) \right| &= \left| \sum_{i \in I} f^*_i \, \pi_{Y=y}(A_i) \right| \\
&\geq \left| f^*_{i_0} \right| \pi_{Y=y}(A_{i_0}) - \sum_{i \in I \setminus \{i_0\}} \left| f^*_{i_0} \right| \pi_{Y=y}(A_i) \\
&= \left( 2\, \pi_{Y=y}(A_{i_0}) - 1 \right) \left| f^*_{i_0} \right|.
\end{align*}
Hence,
\begin{align*} \label{essup}
\|T^*f^*\|_{L^\infty(\nu)} &\geq \left(2 \operatorname{ess\,sup}_{\nu(dy)} \pi_{Y=y}(A_{i_0}) - 1\right)\, \|f^*\|_{L^\infty(\mu)} \notag \\
&\geq (2\xi-1)\, \|f^*\|_{L^\infty(\mu)}.
\end{align*}
\end{proof}
Suppose that \eqref{egtoutf} holds. By Remark \ref{munudiscrete}, if $\nu$ is discrete then $\mu$ is discrete, and by Corollary \ref{Lebesguedensity}, the pair $(X,Y)$, when $\mathcal{Y}=\mathbb{R}$, cannot admit a density with respect to the Lebesgue measure on $\mathbb{R}^2$. The following corollary of Proposition \ref{kkk} shows that these conditions are no longer necessary for \eqref{egtoutf2} to hold.
\begin{cor2}\label{cor2}
\begin{itemize}
  \item[(i)] There exists a probability measure $\pi$ on $\mathbb{R}^2$ admitting a density with respect to the Lebesgue measure such that \eqref{egtoutf2} holds.
  \item[(ii)] There exists a probability measure $\pi$ on $\mathbb{R}^2$ such that its first marginal $\mu$ is absolutely continuous with respect to the Lebesgue measure, its second marginal $\nu$ is discrete, and \eqref{egtoutf2} holds.
\end{itemize}
\end{cor2}
\begin{proof}
Let $\nu$ be a probability measure on $\mathbb{R}_+$ such that for every interval $I \subset \mathbb{R}_+$ of positive length, we have $\nu(I) > 0$. Let $\pi$ be a probability measure on $\mathbb{R}^2$ of second marginal $\nu$, such that $\nu(dy)$-a.e.,
\[
\pi_{Y = y}(dx)
\;=\;
\frac{\mathbf{1}_{A_y}(x)}{\lambda(A_y)} \, dx,
\]
with
$A_y \;=\; \bigl[\{y\} - 2^{-\lfloor y \rfloor},\;\{y\} + 2^{-\lfloor y \rfloor}\bigr] \cap [0,1],$
where $\{y\}$ denotes the fractional part of $y$ and $\lfloor y \rfloor$ denotes the floor of $y$.
Let $\delta\in(\frac{1}{2},1)$. We are going to check that $
\nu\bigl(\{\,y \in \mathbb{R}_+:\,\pi_{Y=y}(A)>\delta\}\bigr)>0
$ for all $A\in \mathcal{B}(\R)$ such that $\mu(A)>0$. By Proposition \ref{kkk}, this shows that the operator $T$ associated with $\pi$ is surjective.  
We conclude by taking
\begin{itemize}
    \item $\nu$ equivalent to $\mathbf{1}_{\mathbb{R}_+}(x) \, \lambda(dx)$, for (i),
    \item $\nu = \sum_{m \in \mathbb{N}} 2^{-m} \, \delta_{q_m}$, where $(q_m)_{m \in \mathbb{N}}$ is an enumeration of $\mathbb{Q}_+$, for (ii).
\end{itemize}
Note that $\mu$ is absolutely continuous with respect to $\mathbf{1}_{[0,1]}\,\lambda$. Let $A\in\mathcal{B}(\mathbb{R})$ be such that $\mu(A)>0$. Without loss of generality, we may assume that \(A \in \mathcal{B}([0,1])\), since \(\operatorname{supp}(\mu) \subset [0,1]\). By the Lebesgue differentiation theorem, we have
\[
\mathbf{1}_A(x)\, \lambda(dx) \text{-a.e., } \quad 
\frac{\lambda(A \cap [x-\varepsilon, x])}{\lambda([x-\varepsilon, x])}
\underset{\varepsilon \to 0^+}{\longrightarrow} 1,\quad
\frac{\lambda(A \cap [x, x+\varepsilon))}{\lambda([x, x+\varepsilon))}
\underset{\varepsilon \to 0^+}{\longrightarrow} 1.
\]
Hence, there exist $x\in(0,1)$ and $\varepsilon>0$ such that $[x-\varepsilon,x+\varepsilon]\subset[0,1]$ and
\begin{align*}
\forall \varepsilon' \in (0, \varepsilon), \quad
\frac{\lambda\bigl(A\cap[x-\varepsilon',x]\bigr)}{\varepsilon'}
>\delta,
\quad
\frac{\lambda\bigl(A\cap[x,x+\varepsilon']\bigr)}{\varepsilon'}
>\delta.
\end{align*}
In particular, it comes that
\begin{equation}\label{eq:interval-density}
\text{for every interval } I \text{ of positive length, such that } I \subset [x-\varepsilon, x+\varepsilon] \text{ and } x \in I, \text{ we have } \frac{\lambda\bigl(A\cap I\bigr)}{\lambda(I)} > \delta. 
\end{equation}
Let \(n\in\mathbb{N}\) and \(y\in[n,n+1)\). 
We observe that
\[
y \in \left[ n + x - \min\left(2^{-n},\, \varepsilon - 2^{-n} \right),\ 
             n + x + \min\left(2^{-n},\, \varepsilon - 2^{-n} \right) \right]
\quad \text{and} \quad 2^{-n} \leq \varepsilon
\]
implies that 
\[
A_y \subset [x - \varepsilon,\, x + \varepsilon] 
\quad \text{and} \quad x \in A_y.
\]
Thus for $n\geq \lceil-\log_2\varepsilon\rceil +1$, we have $2^{-n}\le \frac{e^{-\lceil-\log_2\varepsilon\rceil\log 2}}2\le \frac{\varepsilon}2$, so that 
\[
[n+x-2^{-n},n+x+2^{-n}] \subset \{\,y\in[n,n+1):A_y\subset[x-\varepsilon,x+\varepsilon] \text{ and } x\in A_y \}.
\]
Therefore, by hypothesis on $\nu$, we have
$
\nu\left(\left\{\, y \in [n, n+1) : A_y \subset [x - \varepsilon, x + \varepsilon]\text{ and } x\in A_y  \,\right\}\right)> 0.
$
By \eqref{eq:interval-density}, applied to $I=A_y$, it follows that
\begin{align*}
\nu\left(\left\{\,y \in \mathbb{R}_+:\,\pi_{Y=y}(A) > \delta\,\right\}\right)
&=
\nu\left(\left\{\,y \in \mathbb{R}_+:\,\frac{\lambda(A \cap A_y)}{\lambda(A_y)} > \delta\,\right\}\right) \\
&\ge
\nu\left(\left\{\,y \in \mathbb{R}_+:\, A_y \subset [x - \varepsilon, x + \varepsilon] \text{ and } x\in A_y \right\}\right)
\;>\; 0.
\end{align*}
\end{proof}
Theorem \ref{mainsuffisant} shows that if there exists a Borel set $\tilde A \subset \{ x \in \mathbb{R}: \mu(\{x \}) =0 \}$ such that $\mu(\tilde A) > 0$ and $
\nu(dy)\text{-a.e., }\pi_{Y=y}(\cdot \cap \tilde A) \ll \mu,
$
then \eqref{egtoutf} does not hold. In fact, if we further assume uniform integrability of the densities of $\pi_{Y=y}(\cdot \cap \tilde A)$, then the operator $T$ is not even surjective, as shown in the following corollary of Theorem \ref{corollary2}.
\begin{prop2}
If there exists a Borel subset $\tilde A \subset \{ x \in \mathbb{R}: \mu(\{x \}) =0 \}$ such that $\mu(\tilde A) > 0$ and $\nu(dy)$-a.e., $\mathbf{1}_{\tilde A}(x)\,\pi_{Y=y}(dx)=f_y(x)\,\mu(dx)$ with the family $(f_y)_{y\in\R}$ uniformly integrable with respect to \( \mu \), then \eqref{egtoutf2} does not hold.
\end{prop2}
\begin{proof}
Let $\delta>0$. By uniform integrability, there exists $\varepsilon>0$ such that for every Borel set $A\subset \tilde A$ with $\mu(A)<\varepsilon$, we have $\nu(dy)\text{-a.e., }
\pi_{Y=y}(A)<\delta.
$
There exists $z\in\mathbb R$ such that
$
0<\mu\bigl((-\infty,z]\cap \tilde A \bigr)<\varepsilon.
$
We set $A=(-\infty,z]\cap \tilde A$. We thus have $0<\mu(A)<\varepsilon$, so that $\mu(A)>0$ and $\nu(dy)$\text{-a.e., }
$
\pi_{Y=y}(A)<\delta.
$ Hence, $\|T^*\mathbf{1}_{A}\|_{L^\infty(\nu)}<\delta=\delta\|\mathbf{1}_{A}\|_{L^\infty(\mu)}$. Since $\delta$ is arbitrarily small, condition (ii) in  Theorem~\ref{corollary2} does not hold and, according to this theorem, $T$ is not surjective.
\end{proof}
\begin{proof}[Proof of Proposition \ref{corball}\\]
The implication (i) $\implies$ (ii) is clear. To prove the (ii) \,$\Longleftrightarrow\,\,  $(iii) we are going to check that both assertions are equivalent to  (iv) $\forall f^*\in L^\infty(\mu)$,\,
    $
    \|T^* f^*\|_{L^\infty(\nu)} = \|f^*\|_{L^\infty(\mu)}.$ The implication (ii) $\implies$ (iv) follows from Proposition \ref{kkk} combined with the non-expansiveness of $T^*$. Let us show that (iv) $\implies$ (ii).  
Let $A \in \mathcal{B}(\mathbb{R})$ be such that $\mu(A) > 0$, and consider $f^* = \mathbf{1}_A$.  
Since
$
\left\| T^*f^* \right\|_{L^{\infty}(\nu)} = \left\| f^* \right\|_{L^{\infty}(\mu)} = 1,
$
we conclude that
$
\operatorname{ess\,sup}_{\nu(dy)} \pi_{Y=y}(A) = 1.$ We introduce assertion
(iv) : $\forall f^*\in L^\infty(\mu)$,\,
    $
\|T^* f^*\|_{L^\infty(\nu)} = \|f^*\|_{L^\infty(\mu)}.$  Let us show that (iv) $\implies$ (iii). Theorem 4.13 of \cite{Rudin1991} gives $\mathring{B}_{L^1(\mu)}\subset T(\mathring{B}_{L^1(\nu)}) $ and since $T$ is non-expansive, it comes that $T(\mathring{B}_{L^1(\nu)}) = \mathring{B}_{L^1(\mu)}$. Finally, let us show that (iii) $\implies$ (iv).  
We recall (see  \cite[Theorem 4.3(b)]{Rudin1991}) the general identity in a Banach space $U$:
\[
\forall u^* \in U^*, \,\|u^*\|_{U^*} =
\sup \left\{ |\langle u^*,u  \rangle| : u\in U\mbox{ with }\|u\|_{U}\le 1\right\} 
=
\sup \left\{ |\langle u^*, u \rangle| : u\in U\mbox{ with }\|u\|_{U}< 1\right\}.
\]
Let $f^* \in L^\infty(\mu)$. We then have, using $T(\mathring{B}_{L^1(\nu)} ) = \mathring{B}_{L^1(\mu)}$ for the third equality,
\begin{align*}
\|T^* f^*\|_{L^\infty(\nu)}
&= \sup \left\{ |(T^* f^*,g )| : g \in \mathring{B}_{L^1(\nu)}  \right\} \notag \\
&= \sup \left\{ |\langle f^*, T g  \rangle| : g \in \mathring{B}_{L^1(\nu)}  \right\} \notag \\
&= \sup \left\{ | \langle f^*, f\rangle| : f \in  \mathring{B}_{L^1(\mu)}  \right\} \notag \\
&= \|f^*\|_{L^\infty(\mu)}.
\end{align*}
\end{proof}

\section{Conclusion}





We have essentially proved that under very mild assumptions on the joint distribution of $(X,Y)$, for $(\mathcal{R}_+)$ to hold, for all possible values $x$ taken by $X$, there must exist values $y$ attainable by $Y$ such that, knowing that $Y=y$, one has $X=x$. In particular, $(\mathcal{R}_+)$ does not hold when $\mathcal{Y}=\R^d$ and $(X,Y)$ has a density with respect to the Lebesgue measure on $\R^{1+d}$. Going back to the financial problem \eqref{eq:purePDVsmilecalib_discrete}, our results essentially mean that for \eqref{eq:purePDVsmilecalib_discrete} to hold (for some $\sigma^2(kh,\cdot)$) for any arbitrage-free local volatilities $\sigma_\text{loc}$, one must be able to reconstruct the current value of  the asset price from the values of the path-dependent variables, which is typically not the case---in particular when the current asset price and the path-dependent variables jointly admit a density with respect to the Lebesgue measure. However, our results do not mean that ``pure'' PDV models cannot be well calibrated to market smiles. Indeed, the formulation \eqref{eq:purePDVsmilecalib_discrete} of the smile calibration problem is some mathematical idealization of this problem: it assumes that we are given a full surface of implied volatilities indexed by a continuum of maturities and strikes, while in reality the market only provides bid and ask prices for a finite number of maturities and strikes. In fact, Gazzani and Guyon \cite{gazzani-guyon} have shown that PDV models can very accurately fit market smiles over a large range of maturities, in the absence of a leverage function or a deterministic input volatility term-structure.
\bigskip

\noindent{\bf Acknowledgement.} We thank Anthony Quas (University of Victoria) for suggesting the use of Lebesgue's differentiation theorem in the proofs of Lemma \ref{mainlemma} and Corollary \ref{cor2}.

\end{document}